\newtheorem{theorem}{Theorem}[section]
\newtheorem{lemma}[theorem]{Lemma}
\newtheorem{corollary}[theorem]{Corollary}
\theoremstyle{definition}
\DeclareMathOperator{\ord}{ord}
\DeclareMathOperator{\supp}{supp}
\begin{document}
\title{Inverse zero-sum problems for certain groups of rank three}
\author{Benjamin Girard} 
\address{Sorbonne Universit\'e, Universit\'e Paris Diderot, CNRS, Institut de Math\'ematiques de Jussieu - Paris Rive Gauche, IMJ-PRG, F-75005, Paris, France}
\email{\texttt{benjamin.girard@imj-prg.fr}}
\author{Wolfgang A.~Schmid}
\address{Universit\'e Paris 13, Sorbonne Paris Cit\'e, LAGA, CNRS, UMR 7539, Universit\'e Paris 8, 
F-93430, Villetaneuse, France, and 
Laboratoire Analyse, G\'eom\'etrie et Applications (LAGA, UMR 7539), COMUE Universit\'e Paris Lumi\`eres, Universit\'e Paris 8, 
CNRS, 93526 Saint-Denis cedex, France}
\email{\texttt{schmid@math.univ-paris13.fr}}

\keywords{finite abelian group, zero-sum sequence, inverse problem, Erd\H{o}s--Ginzburg--Ziv constant, inductive method}
\subjclass[2010]{11B30, 11P70, 20K01}

\begin{abstract}
The inverse problem associated to the Erd\H{o}s--Ginzburg--Ziv constant and the $\eta$-constant is solved for finite abelian groups of the form 
$C_2 \oplus C_2 \oplus C_{2n}$ where $n \ge 2$ is an integer.  
\end{abstract}
\maketitle

\section{Introduction}

For $(G,+,0)$ a finite abelian group with exponent $\exp(G)$ the Erd\H{o}s--Ginzburg--Ziv constant, denoted by $\mathsf{s}(G)$, is defined as the smallest nonnegative integer $\ell$ such that each sequence over $G$ of length at least $\ell$ has a subsequence of length $\exp(G)$ whose sum is $0$. The $\eta$-constant, denoted by $\eta(G)$, is defined in the same way except that the length of the subsequence is at most $\exp(G)$ and at least $1$.

These constants have been studied since the early 1960s; for an overview we refer to the survey article \cite{GaoGero06}, in particular to Sections 6 and 7. Their exact values are only known for groups of rank at most two (see \cite[Theorem 6.5]{GaoGero06} and \cite{Reiher_kemnitz,SavChen_kemnitz} for key-contributions), and for a few special types of other groups.  On the one hand, the approach used for groups of rank two can be adapted to apply  to some other groups (see \cite{Luo17,RoyThangadurai18}). On the other hand, there are various results for homocyclic groups, that is, groups of the form $C_n^r$ where $C_n$ denotes a cyclic group of order $n$ and groups that resemble (we refer to \cite[Theorem B]{FanGaoZhong11} for an overview). There is also considerable work on bounds; we refer to \cite{FoxSauermann} and the references therein  for recent results on upper bounds, and we refer to \cite{Edel08} for lower bounds. 

Only recently the exact values were determined for groups of the form $C_2 \oplus C_2 \oplus C_{2n}$ (see  \cite[Theorem 1.3]{FanGaoPengWangZhong13} and \cite[Theorem 1.2.1]{FanZhong16}); this was further extended to groups of the form  $C_2 \oplus C_{2m} \oplus C_{2mn}$ in \cite{GirardSchmid1}. 
Moreover,  for $C_2 \oplus C_2 \oplus C_2 \oplus C_{2n}$ the value of the $\eta$-constant is  known for all $n \ge 1$, and the value of the Erd{\H o}s--Ginzburg--Ziv constant is known for $n\ge 36$ (see \cite[Theorem 1.2.2]{FanZhong16}). 

The associated inverse problems consist in determining all sequences of length  $\mathsf{s}(G)-1$ (respectively $\eta(G)-1$) that have no  zero-sum subsequence of length $\exp(G)$ (respectively of length at most $\exp(G)$ and at least $1$). 
For $\eta(G)$ the inverse problem is solved for groups of rank at most two. For $\mathsf{s}(G)$ the inverse problem is solved for cyclic groups, and for groups 
of rank at most two there is a well-supported conjecture and partial results towards this conjecture are known (see \cite{Wolfgang}). For recent results for certain nonabelian groups see \cite{OhZhong}.  

In the current paper, we solve the inverse problems associated to $\eta(G)$ and to $\mathsf{s}(G)$ for groups of the form $C_2 \oplus C_2 \oplus C_{2n}$.
To solve the inverse problem associated to $\eta(G)$, which is done in Theorem \ref{Inverse_eta}, we use that the structure of minimal zero-sum sequences of maximal length is known for this type of groups (see Theorem \ref{Inverse_Dav}). We recall that a similar approach was used to solve the inverse problem associated to $\eta(G)$ for groups of rank two (see \cite[Theorem 10.7]{GaoGeroldingerPM} or \cite[Section 11.3]{reiherB}). Then, the result for the $\eta$-constant is used to solve the problem for the Erd\H{o}s--Ginzburg--Ziv constant (see Theorem \ref{Inverse_EGZ}). Other tools used in the proof are well-known results on the inverse problem for cyclic groups that we recall in Section \ref{sec_cyclic} as well as ideas used in the proofs of the corresponding direct results mentioned above. 

Conjecturally, there is a tight link between these two constants, namely Gao conjectured (see \cite[Conjecture 6.5]{GaoGero06})  that $\mathsf{s}(G) = \eta(G) + \exp(G)-1$ holds for all finite abelian groups. Furthermore, a close link can also be noted for the structure of extremal sequences in cases where the inverse problem is solved. A close link can also be observed in the current case, yet the exact link is more complex to describe than for groups of rank at most two and for homocyclic groups; we discuss this in more detail after Theorem \ref{Inverse_EGZ}.

\section{Preliminaries}

The notation used in this paper basically matches the one used in \cite{GaoGero06,GeroRuzsa09, GeroKoch06,GrynkiewiczBOOK}. For definiteness, we give a brief summary. 

In this paper,  intervals  are intervals of integers, specifically $[a,b] = \{z \in \mathbb{Z} \colon a \le z \le b \}$.

Let $(G,+, 0)$ be a finite abelian group. For $g$ in $G$, let $\ord(g)$ denote its order in $G$. 
For a subset $A \subset G$, let   $\left\langle A \right\rangle$ denote the subgroup it generates;
 $A$ is called a \emph{generating set} if  $\left\langle A \right\rangle = G$. 
Elements $g_1 , \dots, g_k \in G$ are called \emph{independent} if $\sum_{i=1}^k a_i g_i= 0$, with integers $a_i$, implies that $a_ig_i = 0$ for each $i \in [1,k]$; a set is called  independent when its elements are independent.  We refer to an independent generating set as a basis. 

The \emph{exponent} of $G$  is the least common multiple of the orders of elements of $G$, it is denoted by $\exp(G)$; 
the \emph{rank} of $G$, denoted by $\mathsf{r}(G)$, is the minimum cardinality of a generating subset of $G$. 
For $n$ a positive integer,  $C_n$ denotes a cyclic group of order $n$. 
  
\medskip
By a \emph{sequence} over $G$ we mean an element of the free abelian monoid over $G$, denoted by  $\mathcal{F}(G)$.
We use multiplicative notation for this monoid;  its neutral element is denoted by $1$. 
For a sequence  
\[
S = \displaystyle\prod_{g \in G} g^{v_g}
\]
where   $v_g$ is a nonnegative integer, for each $g\in G$, we denote by: 
\begin{itemize}
\item $\mathsf{v}_g(S) = v_g$ the \emph{multiplicity} of $g$ in $S$.
\item $\mathsf{h}(S)=\max\{\mathsf{v}_g(S) \colon  g \in G\}$ the \emph{height} of $S$.  
\item $\sigma(S)=\sum_{g \in G}\mathsf{v}_g(S)g$ the \emph{sum} of $S$. 
\item $|S| =  \sum_{g \in G}\mathsf{v}_g(S)$ the \emph{length} of $S$. 
\item $\supp(S)=\{g \in G \mid \mathsf{v}_g(S)>0\}$ the \emph{support} of $S$. 
\end{itemize}
Moreover, for $h \in G$, one denotes by $h +S$ the sequence where each element in $S$ is translated by $h$, that is, $h+S = \prod_{g \in G} (h+g)^{v_g}$.

When one writes $S = g_1 \cdots g_{\ell}$ with $g_i \in G$, then the $g_i$ are not necessarily distinct, yet they are determined uniquely up to ordering. A sequence for which the $g_i$ are pairwise distinct, equivalently $\mathsf{v}_g(S) \le 1$ for each $g \in G$, is called \emph{squarefree}.  
A nonempty sequence over $G$ of length at most $\exp(G)$ is called \emph{short}.  

Let $S$ be a sequence  over $G$. A divisor $T$ of $S$ in  $\mathcal{F}(G)$ is called a \emph{subsequence} of $S$; 
by $ST^{-1}$ we denote the sequence such that $T (ST^{-1}) = S$.
Subsequences $T_1, \dots, T_k$  of $S$  are called \emph{disjoint} if the product 
$T_1 \cdots T_k$ is also a subsequence of $S$.

An element $s \in G$ is a \emph{subsum} of $S$ if
\(
s=\sigma(T) \text{ for some } 1 \neq T  \mid S.
\)
The sequence $S$ is called a \emph{zero-sum free sequence} if $0$ is not a subsum. If $\sigma(S)=0$, then $S$ is called a \emph{zero-sum sequence}; 
if, in addition, one has $\sigma(T) \neq 0$ for all proper and nonempty subsequences $T $ of $S$, then $S$ is called a \emph{minimal zero-sum sequence}.

For $L $ a set of integers, we set
\[
\Sigma_L(S) = \{\sigma(T) \colon 1 \neq T \mid S\text{ with } |T| \in L\}.
\] 
For $\Sigma_{\mathbb{Z}_{\ge 1}}(S)$ we just write $\Sigma(S)$, and for $\Sigma_{\{k\}}(S)$ we write $\Sigma_k(S)$. 

Moreover, $\mathsf{s}_L(G)$ is the smallest nonnegative integer $\ell$ (if one exists)  such that for each sequence $S \in \mathcal{F}(G)$ with $|S|\ge \ell$, one has $0  \in \Sigma_{L}(S)$; if no such integer exists, then one sets $\mathsf{s}_L(G) = \infty$. The constants we mentioned in the introduction are natural special cases; the choice of $\exp(G)$ in the definitions is natural, for example it is the smallest integer for which the respective constants are finite. 

\begin{itemize}
\item The \emph{Davenport constant}, denoted by $\mathsf{D}(G)$,  is $\mathsf{s}_{\mathbb{Z}_{\ge 1}}(G)$.   

\item The \emph{Erd\H{o}s--Ginzburg--Ziv constant}, denoted by $\mathsf{s}(G)$,  is $\mathsf{s}_{\exp(G)}(G)$.   

\item The \emph{$\eta$-constant}, denoted by $\eta(G)$,  is $\mathsf{s}_{[1,\exp(G)]}(G)$.
\end{itemize}
The Davenport constant of $G$ is also equal to the maximal length of a minimal zero-sum sequence over $G$.

\section{Results for cyclic groups and  auxiliary results}
\label{sec_cyclic}

The $\eta(G)$ and $\mathsf{s}(G)$ invariants for cyclic groups are well-known and the inverse problems are solved. Indeed, in the case of cyclic groups also the structure of sequences considerably shorter than $\eta(G)-1$ and $\mathsf{s}(G)-1$
without the respective zero-sum subsequences is known (for example, see \cite{Gaoetal, SavChen07bis, SavChen07, Yuan}). We only recall a special case we need for our proofs. While the result below is formulated for \emph{short} zero-sum subsequences, since this fits the current context, having no short zero-sum subsequence and having no nonempty zero-sum subsequence are equivalent for \emph{cyclic} groups, and the result is usually phrased in the latter form.  

The following result is a direct consequence of \cite[Theorem 8]{SavChen07bis} (also see \cite[Theorem 11.1]{GrynkiewiczBOOK}).   

\begin{theorem}
\label{Inverse_eta_cyc} 
Let $H \simeq C_{n}$ where $n \ge 3$ is an integer. 
\begin{enumerate}
\item Every sequence $T$ of length $\left|T\right|=\eta(H)-1=n-1$ not containing any short zero-sum sequence has the form
\[
T=g^{n-1} \text{ where } \ord(g)=n.\]
\item
Every sequence $S$ of length $\left|T\right|=\eta(H)-2=n-2$ not containing any short zero-sum sequence  has the form:
\begin{enumerate}
\item \(
T=g^{n-2} \text{ where } \ord(g)=n,\) or 
\item \(T=g^{n-3}(2g) \text{ where } \ord(g)=n.\)
\end{enumerate}
In the former case $ \Sigma(T)  = H \setminus \{0,-g\}$, in the latter case  $ \Sigma(T)  = H \setminus \{0\} $ (except for $n=3$ where the latter case coincides with the former).
\end{enumerate}
\end{theorem}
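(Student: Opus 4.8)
The plan is to recognize both parts as instances of the inverse Davenport problem for $C_n$ and to deduce them from a single refined subset-sum estimate. Since $H \simeq C_n$ one has $\eta(H) = \mathsf{D}(H) = n$, and over a cyclic group the maximal length of a minimal zero-sum sequence is $\exp(H)$; hence ``contains no short zero-sum sequence'' is equivalent to ``is zero-sum free''. Thus part (1) asks for the zero-sum free sequences of maximal length $n-1$, and part (2) for those of length $n-2$. The two tools I would set up first are: (i) the partial-sum pigeonhole, namely that for a zero-sum free $T$ and any ordering $a_1 \cdots a_k$ the partial sums $\sigma(a_1 \cdots a_i)$, $0 \le i \le k$, are pairwise distinct (a coincidence would produce a zero-sum block), so that $|\Sigma(T)| \ge |T|$ and, when $|T| = n-1$, these $n$ sums cover $C_n$ and force $\Sigma(T) = C_n \setminus \{0\}$; and (ii) the refinement $|\Sigma(T)| \ge |T| + |\supp(T)| - 1$ for zero-sum free $T$ over $C_n$ (valid while the right-hand side does not exceed $n-1$). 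This refinement is the substantive input and is exactly the sort of statement packaged in the Savchev--Chen structure theorem cited above.

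For part (1), since $T$ is zero-sum free we have $0 \notin \Sigma(T)$, hence $|\Sigma(T)| \le n-1 = |T|$. Combined with (ii) this gives $|T| + |\supp(T)| - 1 \le n-1 = |T|$, so $|\supp(T)| \le 1$ and $T = g^{n-1}$ for a single element $g$. Zero-sum freeness then forces $g, 2g, \dots, (n-1)g$ to be nonzero, i.e.\ $\ord(g) = n$, as claimed.

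For part (2) with $|T| = n-2$, estimate (ii) gives $n-3 + |\supp(T)| \le |\Sigma(T)| \le n-1$, so $|\supp(T)| \le 2$. If $|\supp(T)| = 1$ then $T = g^{n-2}$; zero-sum freeness gives $\ord(g) \ge n-1$, and since $\ord(g) \mid n$ the only possibility for $n \ge 3$ is $\ord(g) = n$, which is case (a), with $\Sigma(T) = \{g, 2g, \dots, (n-2)g\} = C_n \setminus \{0, -g\}$. The remaining, and main, case is $|\supp(T)| = 2$. Here I would invoke an extendability dichotomy: for $x \in C_n$ the sequence $Tx$ is zero-sum free precisely when $-x \notin \Sigma(T) \cup \{0\}$, and since $|\Sigma(T) \cup \{0\}| \ge n-1$ this complement has at most one element. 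If it were nonempty, $Tx$ would be zero-sum free of length $n-1$, whence $Tx = g^{n-1}$ by part (1) and $\supp(T)$ would be a singleton, a contradiction; therefore $\Sigma(T) = C_n \setminus \{0\}$. Writing $T = a^p b^q$ with $a \neq b$ and $p + q = n-2$, the condition $\Sigma(T) = C_n \setminus \{0\}$ becomes the covering statement $\{\, i a + j b : 0 \le i \le p,\ 0 \le j \le q \,\} = C_n$; solving this in $C_n$ forces one of $a, b$ to be a generator $g$, the other to be $2g$, and the multiplicities to be $n-3$ and $1$, i.e.\ $T = g^{n-3}(2g)$, with $\Sigma(T) = \{g, 2g, \dots, (n-1)g\} = C_n \setminus \{0\}$, which is case (b).

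Finally I would record the degeneracy at $n = 3$: there $n-2 = 1$ and case (b) reads $T = (2g)$ with $2g = -g$ of order $3$, so it coincides with case (a) after renaming the generator. The step I expect to be the real obstacle is the $|\supp(T)| = 2$ covering problem, i.e.\ showing that $\{\, ia+jb : 0 \le i \le p,\ 0 \le j \le q \,\} = C_n$ with $p+q = n-2$ admits only the solution $b = 2a$ (or $a = 2b$) with $a$ a generator; this is the one genuinely arithmetic computation, and it is precisely what the Savchev--Chen inverse theorem supplies in one stroke, so an alternative to the whole argument is to quote that theorem and read off the two admissible shapes directly.
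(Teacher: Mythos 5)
The paper does not actually prove this theorem: it is imported wholesale as a direct consequence of Savchev--Chen \cite[Theorem 8]{SavChen07bis}. So your closing remark --- that one can ``quote that theorem and read off the two admissible shapes directly'' --- is not an alternative to the paper's argument; it \emph{is} the paper's argument. Judged as a proof by citation, your proposal coincides with the paper.

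Judged as the self-contained proof you sketch first, there are two places where the substance is asserted rather than established. First, the inequality $|\Sigma(T)| \ge |T| + |\supp(T)| - 1$ for zero-sum free $T$ is not ``packaged in the Savchev--Chen structure theorem''; it is a separate statement, and you neither prove it nor cite it precisely. (It is true, and can be obtained from the standard lemma that $|\Sigma(S_1 S_2)| \ge |\Sigma(S_1)| + |S_2|$ for a zero-sum free product $S_1 S_2$ together with a bound of the form $|\Sigma(S_1)| \ge 2|S_1| - 1$ for squarefree zero-sum free $S_1$ over a cyclic group, but that is an argument you would have to supply.) Second, and more seriously, the case $|\supp(T)| = 2$ --- which you yourself flag as ``the real obstacle'' --- is exactly where the content of part (2)(b) lies, and you do not carry out the computation showing that surjectivity of $(i,j) \mapsto ia+jb$ from $[0,p]\times[0,q]$ onto $C_n$ with $p+q=n-2$, together with $(0,0)$ being the unique preimage of $0$, forces $\{a,b\}=\{g,2g\}$ with $g$ a generator and $\{p,q\}=\{n-3,1\}$. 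Everything surrounding these two points is correct: the equivalence of ``no short zero-sum subsequence'' with ``zero-sum free'' over a cyclic group, the pigeonhole bound $|\Sigma(T)| \ge |T|$ and the resulting identification $\Sigma(T)=H\setminus\{0\}$ in part (1), the extendability dichotomy (using part (1)) that forces $\Sigma(T)=H\setminus\{0\}$ when $|\supp(T)|=2$, the order computations in case (a), and the $n=3$ degeneracy. But as written the argument is a correct reduction of the theorem to two unproven claims rather than a proof; to complete it you must either do the $a^pb^q$ classification by hand or do what the paper does and invoke Savchev--Chen from the outset.
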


For the following results see \cite{SavChen07}, especially the discussion after Corollary 7 there.  

\begin{theorem}
\label{Inverse_EGZ_cyc} 
Let $H\simeq C_{n}$ where $n \ge 3$ is an integer.
\begin{enumerate}
\item Every sequence $T$ of length $\left|T\right|= \mathsf{s}(H)-1=2n-2$ not containing any short zero-sum sequence has the form
\(T=g^{n-1}h^{n-1}\)  where $g,h \in H$ and $ \ord(g-h)=n$.
\item Every sequence $S$ of length $\left|T\right|=\mathsf{s}(H)-2=2n-3$ not containing any short zero-sum sequence  has the form:
\begin{enumerate}
\item \(T=g^{n-1}h^{n-2} \text{ where } \ord(g-h)=n\) or 
\item \(T=g^{n-1}h^{n-3}(2h-g) \text{ where } \ord(g-h)=n.\)
\end{enumerate}
In the former case $ \Sigma_{n-2}(T)  = H \setminus \{-g-h\}$, in the latter case  $ \Sigma_{n-2}(T)  = H$ (except for $n=3$ where the latter case coincides with the former).
\end{enumerate}
\end{theorem}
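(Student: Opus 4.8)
The plan is to reduce both parts to the $\eta$-inverse result of Theorem \ref{Inverse_eta_cyc} by peeling off a high-multiplicity element (the relevant notion for $\mathsf{s}(H)=\mathsf{s}_{\exp(H)}(H)$ being zero-sum subsequences of length exactly $\exp(H)=n$). Two elementary observations drive the reduction. First, since $nh=0$ for every $h\in H$, translating a sequence by a fixed element preserves the property of having a zero-sum subsequence of length exactly $n$ (a length-$n$ subsequence $U$ with $\sigma(U)=0$ is sent to one with sum $\sigma(U)-nh=0$); translation also preserves the multiset of multiplicities, hence $\mathsf{h}$. So I may freely translate so that a most frequent element becomes $0$. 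Second, once $\mathsf{v}_0(T)=\mathsf{h}(T)=:a$, I write $T=0^a U$ with $U$ the subsequence of nonzero terms; then any nonempty zero-sum $W\mid U$ with $n-a\le |W|\le n$ can be padded by $n-|W|\le a$ copies of $0$ to a length-$n$ zero-sum of $T$. Since minimal zero-sums over $H\simeq C_n$ have length at most $\mathsf{D}(H)=n$, this forces $U$ to be zero-sum free as soon as $a\ge n-1$, reducing the problem to the classification of zero-sum free sequences in Theorem \ref{Inverse_eta_cyc}.

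The one nontrivial input I need is the height bound: if $T$ over $H$ has no zero-sum subsequence of length $n$, then $\mathsf{h}(T)\ge |T|-n+1$, equivalently $|T|-\mathsf{h}(T)\le n-1$. This is where the genuine content of the EGZ-inverse sits, and I would quote it, noting that it is sharp with $0^{n-1}g^{n-1}$ as extremal configuration. Granting it, part (1) is immediate: for $|T|=2n-2$ the bound gives $\mathsf{h}(T)\ge n-1$, while $\mathsf{h}(T)\le n-1$ because an element of multiplicity $n$ would yield the length-$n$ zero-sum $g^{n}$. Thus $\mathsf{h}(T)=n-1$; after translating so that $\mathsf{v}_0(T)=n-1$, the nonzero part $U$ has length $(2n-2)-(n-1)=n-1$, and the padding argument (every minimal zero-sum of $U$ has length between $1$ and $n-1$, hence is paddable using the $n-1$ available zeros) shows $U$ is zero-sum free. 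By Theorem \ref{Inverse_eta_cyc}(1), $U=h^{n-1}$ with $\ord(h)=n$, so $T=0^{n-1}h^{n-1}$; translating back recovers $T=g^{n-1}(g+h)^{n-1}$ with $\ord((g+h)-g)=\ord(h)=n$, the asserted form.

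For part (2) with $|T|=2n-3$ the same two bounds give $\mathsf{h}(T)\in\{n-2,n-1\}$. I would first rule out $\mathsf{h}(T)=n-2$: translating the maximal element to $0$ leaves a nonzero part $U$ of length $(2n-3)-(n-2)=n-1$; since every minimal zero-sum of $U$ has length at least $2$ (there are no zeros) and at most $n-1$, it could be padded with at most $n-2$ zeros, so $U$ must be zero-sum free, whence $U=h^{n-1}$ by Theorem \ref{Inverse_eta_cyc}(1) — contradicting $\mathsf{h}(U)\le\mathsf{h}(T)=n-2$. Hence $\mathsf{h}(T)=n-1$, and as before the nonzero part $U$ is zero-sum free of length $n-2$. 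Theorem \ref{Inverse_eta_cyc}(2) then yields $U=h^{n-2}$ or $U=h^{n-3}(2h)$ with $\ord(h)=n$; translating back gives the two forms $g^{n-1}(g+h)^{n-2}$ and $g^{n-1}(g+h)^{n-3}(g+2h)$, and setting the second distinguished element to be $g+h$ rewrites $g+2h$ as $2(g+h)-g$, matching form (b).

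It remains to determine $\Sigma_{n-2}(T)$ in each case, which is a direct computation once the structure is fixed. Writing every length-$(n-2)$ subsequence as $i$ copies of one of the two elements and the rest of the other, and expressing the sum through $d:=g-h$ (a generator of $H$), in form (a) the sums form a single coset of an arithmetic progression in $d$ omitting exactly one residue, which one checks equals $-g-h$. In form (b) the subsequences split according to whether the distinguished element $2h-g$ is used, producing two near-complete progressions in $d$ offset so that together they cover $H$; concretely the two residues missed by the family avoiding $2h-g$ are recovered by the family using it, the verification relying on $\ord(d)=n\ge 4$ (for $n=3$ the two forms coincide and the claim is vacuous). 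I expect the only real difficulty to be the height bound quoted in the second paragraph; everything after it is the peeling reduction to Theorem \ref{Inverse_eta_cyc} together with this elementary $\Sigma_{n-2}$ bookkeeping.
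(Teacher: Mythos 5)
The paper does not prove this statement at all: Theorem \ref{Inverse_EGZ_cyc} is imported verbatim from Savchev--Chen \cite{SavChen07} (``the discussion after Corollary 7 there''), so there is no in-paper proof to match yours against. Judged on its own terms, your reduction is correct and cleanly executed: translation by a fixed element preserves the property of having no zero-sum subsequence of length exactly $n$ (you are right that the relevant hypothesis here is the $n$-term condition, not the literal ``short zero-sum'' wording); the padding argument correctly shows that once $\mathsf{v}_0(T)=\mathsf{h}(T)=n-1$ the nonzero part is zero-sum free, so Theorem \ref{Inverse_eta_cyc} applies; the elimination of the case $\mathsf{h}(T)=n-2$ in part (2) via the height of $h^{n-1}$ is a nice touch; and I checked the $\Sigma_{n-2}$ bookkeeping --- in form (a) the sums are $-2h-i(h-g)$ for $i\in[0,n-2]$, omitting exactly $-g-h$, and in form (b) the two families together cover all of $H$ for $n\ge 4$ --- so those claims are right as well.

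The one genuine weak point is the height bound $\mathsf{h}(T)\ge |T|-n+1$, which you quote rather than prove. For $|T|=2n-2$ this is the classical inverse Erd\H{o}s--Ginzburg--Ziv theorem for $C_n$ and is safely citable. For $|T|=2n-3$, however, the bound $\mathsf{h}(T)\ge n-2$ is not an off-the-shelf lemma independent of the statement you are proving: it is essentially the hard content of the very Savchev--Chen result that the paper cites for this theorem, and you give no precise reference or argument for it. So your write-up is best described not as a proof but as a correct reduction of Theorem \ref{Inverse_EGZ_cyc} to (i) that height bound and (ii) Theorem \ref{Inverse_eta_cyc}. That reduction has real value --- it makes transparent how the EGZ-inverse structure follows from the $\eta$-inverse one, which the paper leaves implicit --- but to stand as a proof it would need either a self-contained argument for the height bound at length $2n-3$ (e.g., via the standard ordering/differences argument used to deduce $\mathsf{s}(C_n)$ from $\mathsf{D}(C_n)$, with equality cases tracked) or an explicit citation of a result that states it.
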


We give two results on groups of the form $C_2^r$ that are needed for the proofs of our main results.  
For the following well-known result see, e.g., \cite[Theorem 7.2]{FreezeSchmid10}. 

\begin{lemma}\label{sum_le3} For $r \ge 2$, one has 
$\mathsf{s}_{[1,3]}(C_2^r)  = 1 +2^{r-1}$. 
\end{lemma}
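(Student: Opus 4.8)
Write $G = C_2^r$. The plan is to interpret $\mathsf{s}_{[1,3]}(G) - 1$ as the maximal length of a sequence over $G$ having no nonempty zero-sum subsequence of length at most $3$, and to show this maximum equals $2^{r-1}$. First I would unravel what such a sequence $S$ looks like. Since $\exp(G) = 2$, every element is its own inverse. A zero-sum subsequence of length $1$ is a single $0$, so $S$ must avoid $0$. A zero-sum subsequence of length $2$ is $g^2$ for some $g$, as $g + g = 0$; so $S$ must be squarefree, that is, $\mathsf{v}_g(S) \le 1$ for all $g$. Having excluded these, a zero-sum subsequence of length $3$ consists of three distinct elements $a,b,c$ with $a+b+c = 0$, equivalently $c = a+b$. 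Thus, writing $A = \supp(S) \subseteq G \setminus \{0\}$, the sequence $S$ has no zero-sum subsequence of length at most $3$ precisely when $0 \notin \supp(S)$, $S$ is squarefree, and $a+b \notin A$ for all distinct $a,b \in A$; in that case $|S| = |A|$. The task therefore reduces to determining the largest such \emph{sum-free} subset $A$ of $G \setminus \{0\}$.

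For the lower bound I would exhibit a set of size $2^{r-1}$: fix a nonzero group homomorphism $\phi \colon G \to C_2$ and set $A = \{x \in G \colon \phi(x) = 1\}$. Then $0 \notin A$, $|A| = 2^{r-1}$, and for $a,b \in A$ one has $\phi(a+b) = \phi(a) + \phi(b) = 0$, so $a+b \notin A$. The associated squarefree sequence $\prod_{a \in A} a$ then has length $2^{r-1}$ and no zero-sum subsequence of length at most $3$, giving $\mathsf{s}_{[1,3]}(C_2^r) \ge 2^{r-1} + 1$.

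For the upper bound I would prove that any sum-free $A \subseteq G \setminus \{0\}$ satisfies $|A| \le 2^{r-1}$. The key step is a disjoint-translate argument: fix any $a_0 \in A$ and consider the translate $a_0 + A$. I claim $A \cap (a_0 + A) = \emptyset$. Indeed, if $y$ lay in the intersection, then $y \in A$ and $y = a_0 + z$ for some $z \in A$; setting $z = a_0 + y \in A$ we obtain $y + z = a_0 \in A$, while $y \neq z$ (otherwise $a_0 = 0$), contradicting sum-freeness. Since $A$ and $a_0 + A$ are disjoint subsets of $G$ of equal cardinality, $2|A| \le |G| = 2^r$, whence $|A| \le 2^{r-1}$. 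Combining the two bounds gives maximal length $2^{r-1}$, and therefore $\mathsf{s}_{[1,3]}(C_2^r) = 1 + 2^{r-1}$.

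The steps are all elementary; the only point requiring a moment's care is the disjointness of the translate, which is exactly where the sum-free hypothesis — together with $0 \notin A$, forcing $y \neq z$ — is used. I do not anticipate a serious obstacle, but I would check the small-rank behaviour, noting that $r \ge 2$ guarantees both a nonzero functional $\phi$ and a nonempty $A$ for the construction.
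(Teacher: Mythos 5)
Your proof is correct. Note that the paper does not prove this lemma at all: it simply cites it as well known, referring to \cite[Theorem 7.2]{FreezeSchmid10}, so there is no in-paper argument to compare against. Your self-contained argument is the natural one and all steps check out: the reduction is right (avoiding zero-sums of length $1$, $2$, $3$ forces $0 \notin \supp(S)$, squarefreeness, and then a length-$3$ zero-sum is exactly three distinct $a,b,c$ with $a+b=c$, where $c=a+b$ is automatically distinct from $a$ and $b$ because $a,b \neq 0$); the affine-hyperplane construction $\phi^{-1}(1)$ gives the lower bound; and the disjoint-translate argument $A \cap (a_0+A) = \emptyset$ gives $|A| \le 2^{r-1}$, using that $y \neq z$ since $0 \notin A$. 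The one pedantic point is that the upper bound presupposes $A \neq \emptyset$ to choose $a_0$, but the empty case is trivial. Your proof in fact works for $r=1$ as well; the hypothesis $r \ge 2$ is not needed for the statement, only for the applications in the paper.
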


We also need a result on zero-sum sequences of length $4$. 

\begin{lemma}\label{sum_=4}
Let $S$ be a squarefree sequence of length $5$ over $C_2^3$. Then $S$ has a unique 
zero-sum subsequence of length $4$. 
\end{lemma}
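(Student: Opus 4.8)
The plan is to identify $S$ with its support $A = \supp(S)$, a set of five distinct elements of $C_2^3$, and to exploit that in $C_2^3$ every element is its own inverse, so that $\sigma(S) = \sum_{g \in A} g$ and subtraction coincides with addition. First I would record the basic reduction: a subsequence $B \mid S$ of length four with $\sigma(B) = 0$ is the same thing as a four-element subset of $A$, and the complementary element $a$ (the unique element of $A$ not dividing $B$) satisfies $\sigma(S) = \sigma(B) + a = a$. Hence any such $B$ must equal $S a^{-1}$ with $a = \sigma(S)$, which makes \emph{uniqueness automatic}; the entire content of the lemma is therefore the \emph{existence} assertion $\sigma(S) \in A$.

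To prove existence I would pass to the three elements $x,y,z$ forming the complement $C_2^3 \setminus A$. The key computational input is that the sum of all eight group elements vanishes: in each of the three coordinates the entry $1$ occurs exactly four times, and $4$ is even. Consequently $\sigma(S) = x + y + z$, and it remains only to verify that this element lies in $A$, equivalently that $x + y + z \notin \{x,y,z\}$.

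This last verification is the one genuinely nontrivial step, and it is precisely where the squarefree (distinctness) hypothesis is used: were $x + y + z = x$, we would get $y + z = 0$, that is $y = z$, contradicting that $x,y,z$ are pairwise distinct, and the cases $x+y+z = y$ and $x+y+z = z$ are symmetric. Thus $\sigma(S) = x+y+z$ avoids the complement, lies in $A$, and $B = S(\sigma(S))^{-1}$ is the unique zero-sum subsequence of length four. I do not anticipate a serious obstacle; the only thing to handle with care is the bookkeeping of the reduction to the three-element complement together with the repeated use of $-g = g$, after which the whole argument collapses to the short computation above.
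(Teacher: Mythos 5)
Your proof is correct, and it takes a genuinely different route from the paper's. You observe that any length-$4$ zero-sum subsequence must be $S a^{-1}$ with $a=\sigma(S)$, so uniqueness is automatic and everything reduces to showing $\sigma(S)\in\supp(S)$; you then compute $\sigma(S)=x+y+z$ via the fact that all eight elements of $C_2^3$ sum to $0$, and rule out $x+y+z\in\{x,y,z\}$ because that would force two of the three complementary elements to coincide. The paper instead normalizes by a translation so that $0\mid S$, argues that the four remaining elements cannot lie in a proper subgroup and hence contain a basis $f_1,f_2,f_3$, and then splits into the two cases $g=f_1+f_2+f_3$ and $g=f_i+f_j$ for the fourth element, exhibiting the zero-sum subsequence explicitly in each case. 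Your complement argument buys a cleaner, case-free proof and makes the uniqueness statement transparent from the outset; the paper's version buys an explicit description of the zero-sum subsequence in terms of a basis, which is closer in spirit to how the lemma is later applied (the subsequent proofs repeatedly decompose the length-$5$ block as $Uw$ with $U$ the zero-sum part and $w$ the leftover element, which in your notation is exactly $w=\sigma(S)$). One small imprecision, not affecting correctness: the squarefreeness is really consumed at the moment you identify $S$ with a five-element set, which forces the complement to consist of three \emph{distinct} elements; the final contradiction $y=z$ then uses that distinctness rather than the squarefree hypothesis directly.
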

\begin{proof}
Since the property is invariant under translation of the sequence, we can assume that $0 \mid S$. 
The four nonzero elements in $S$ cannot be contained in a proper subgroup, 
and thus $S$ contains three independent elements $f_1, f_2, f_3$. 

Let $g$ denote the fourth nonzero element. Either $g = f_1 + f_2 + f_3$ and the sequence 
$gf_1f_2f_3$ is the unique zero-sum subsequence of length four, or $g$ is the sum of two of the independent elements, 
say $g= f_1+f_2$, and $0gf_1f_2$ is the unique  zero-sum subsequence of length four.
\end{proof}

\section{Inverse problem associated to $\eta\left(C_2 \oplus C_2 \oplus C_{2n}\right)$ for $n \geq 2$}

We determine the structure of all sequences over $C_2 \oplus C_2 \oplus C_{2n}$, for $n\ge 2$, of length $\eta\left(C_2 \oplus C_2 \oplus C_{2n}\right)-1=2n+3$ that do not have a short zero-sum subsequence. The case $n=1$, that is, $C_2^3$, is different yet well-known and direct. For completeness we recall that $\eta(C_2^3)-1 =7$, and the only example of a sequence of length $7$ without zero-sum subsequence of length $2$ is the squarefree sequence of all nonzero elements; in fact, this is true for any group of the form $C_2^r$, because the only zero-sum sequence of length at most $\exp(G)$ are $0$ and $g^2$ for $g \in C_2^r$.

\begin{theorem}\label{Inverse_eta} 
Let $G \simeq C_2 \oplus C_2 \oplus C_{2n}$, where $n \ge 2$. 
A sequence $S$ over $G$ of length $|S|=\eta(G)-1=2n+3$ contains no short zero-sum subsequence if and only if there exists a basis $\{f_1,f_2, f_3\}$ of $G$, where $\ord(f_1) = \ord(f_2) = 2$ and $\ord(f_3) = 2n$, such that $S$ is equal to one of the following sequences:
\begin{enumerate}
\item[$(\eta 1)$] 
$f^{2n-1-2v}_3 (f_3+f_2)^{2v+1}f_2(af_3+f_1)\bigl((1-a)f_3+f_2+f_1\bigr)$ with $v \in [ 0, n-1 ]$ and $a \in [ 2, n-1 ]$.
\item[$(\eta 2)$] 
$f^{2n-1}_3 (af_3 + f_2)\bigl((1 - a)f_3 + f_2\bigl)(bf_3 + f_1)\bigl((1 - b)f_3 + f_1\bigl)$ with $a, b \in [ 2, n - 1 ]$ and $a \ge b$.
\item[$(\eta 3)$] 
$\prod^{2n+1}_{i=1}(f_3 + d_i)f_2f_1$ with $S' = \prod^{2n+1}_{i=1} d_i \in \mathcal{F}(\langle f_1, f_2 \rangle)$ and $\sigma(S') \notin \supp(S')$.
\end{enumerate}
\end{theorem}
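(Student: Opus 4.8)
The forward implication is a direct verification carried out family by family. For each of $(\eta 1)$, $(\eta 2)$, $(\eta 3)$ one first checks that the length equals $2n+3$, and then that no nonempty subsequence of length at most $\exp(G)=2n$ sums to $0$. A convenient device is the canonical projection $q\colon G \to G/2G$, where $2G=\langle 2f_3\rangle$ and $G/2G\cong C_2^3$, with $f_1,f_2,f_3$ mapping to a basis; since a zero-sum subsequence $T$ of $S$ must in particular have total $f_3$-coordinate divisible by $2n$ while its length is at most $2n$, controlling the admissible lengths reduces to book-keeping in the third coordinate together with the two $C_2$-coordinates. For $(\eta 3)$ this makes the role of the hypothesis transparent: a zero-sum of length exactly $2n$ can only be obtained by using $2n$ of the $2n+1$ elements $f_3+d_i$, that is, by omitting one $d_j$, and its sum is $\sigma(S')+d_j$; this vanishes precisely when $d_j=\sigma(S')$, which is excluded by $\sigma(S') \notin \supp(S')$. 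The remaining shorter combinations are ruled out in the same way.

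For the converse, the starting observation is that $|S| = 2n+3 = \mathsf{D}(G)+1$, so $S$ is not zero-sum free. Because $S$ has no short zero-sum subsequence, every nonempty zero-sum subsequence has length strictly larger than $\exp(G)=2n$; hence every minimal zero-sum subsequence $W$ of $S$ satisfies $2n < |W| \le \mathsf{D}(G)=2n+2$, so that $|W|\in\{2n+1,\,2n+2\}$. The plan is to fix such a $W$ of the largest possible length and to analyse the one or two remaining elements of $SW^{-1}$, mirroring the rank-two argument of \cite[Theorem 10.7]{GaoGeroldingerPM}. This gives a dichotomy: either $S$ contains a minimal zero-sum subsequence of maximal length $2n+2$, or every minimal zero-sum subsequence has length exactly $2n+1$.

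The principal case is when $S$ contains a minimal zero-sum subsequence $W$ with $|W|=2n+2=\mathsf{D}(G)$. Here Theorem \ref{Inverse_Dav} supplies the explicit list of shapes of $W$ relative to a suitable basis $\{f_1,f_2,f_3\}$ with $\ord(f_1)=\ord(f_2)=2$ and $\ord(f_3)=2n$; in each shape $W$ already carries a high-multiplicity element, which will become the $f_3^{\,\cdot}$ backbone of the normal form, and $S=Wg$ for a single element $g$. One then decides which $g$ may be adjoined without producing a zero-sum subsequence of length at most $2n$. This is controlled in two essentially independent directions: along $\langle f_3\rangle \cong C_{2n}$ the inverse results Theorem \ref{Inverse_eta_cyc} and Theorem \ref{Inverse_EGZ_cyc} describe exactly which residues occur as subsums of the $f_3$-components, while modulo $2G$ the group $C_2^3$ is handled by Lemma \ref{sum_le3} and Lemma \ref{sum_=4}. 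Matching these two constraints sorts the admissible $g$ into the configurations recorded in $(\eta 1)$, $(\eta 2)$ and $(\eta 3)$, after a suitable renaming of the basis.

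The case in which the longest minimal zero-sum subsequence has length only $2n+1$ is where I expect the main obstacle. There is no ready-made inverse theorem at length $\mathsf{D}(G)-1$, so the structure of $W$ must be extracted by hand, and two residual elements $g,h$ of $SW^{-1}$ must be analysed simultaneously rather than one at a time. I would again reduce modulo $2G$ to $C_2^3$, apply Lemmas \ref{sum_le3} and \ref{sum_=4} to constrain the images, and lift the resulting information using the cyclic inverse results in the $\langle f_3\rangle$-direction. The final and most delicate step is the normalization: in every sub-case one must exhibit a single basis bringing $S$ into exactly one of $(\eta 1)$–$(\eta 3)$, pin down the admissible parameter ranges $v\in[0,n-1]$ and $a,b\in[2,n-1]$ with $a\ge b$, and verify that the three families are mutually exclusive, so that no extremal sequence is omitted or represented in two different forms.
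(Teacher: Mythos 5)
Your reduction of the converse to the dichotomy ``some minimal zero-sum subsequence of $S$ has length $\mathsf{D}(G)=2n+2$'' versus ``all minimal zero-sum subsequences have length $2n+1$'' leaves the second branch essentially untouched, and that branch is exactly where the real difficulty lies. You flag it yourself as the main obstacle and offer only a plan (reduce modulo $2G$, invoke the cyclic inverse results), but no argument that the case is impossible or that the structure can be extracted; as it stands this is a genuine gap, not a routine verification. The paper never faces this dichotomy: it runs the inductive method along $H\hookrightarrow G\to G/H$ with $H\simeq C_n$ and $G/H\simeq C_2^3$ (the same quotient you propose, since $2G=H$) to \emph{construct} a minimal zero-sum subsequence of length $\mathsf{D}(G)$ inside $S$. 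Concretely, it first shows $S$ meets $H$ trivially, then covers $S$ as $T_1\cdots T_{n-1}S_0'$ with $|T_i|=2$, $\sigma(T_i)=g$ for a fixed $g\in H$ of order $n$, and $|S_0'|=5$, extracts via Lemma \ref{sum_=4} a unique $U=stuv\mid S_0'$ with $\sigma(U)\in H$, and proves that $V=T_1\cdots T_{n-1}U$ is automatically a minimal zero-sum sequence of length $\mathsf{D}(G)$ satisfying $s+t=u+v=w$ and $2w=g$, where $w$ is the leftover element. Establishing that such a $V$ always exists is precisely the work your dichotomy defers to the unhandled branch.

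There is a secondary weakness even in your principal case. Knowing only that $S=Wg$ with $W$ of one of the six types of Theorem \ref{Inverse_Dav} is strictly less information than the paper has: the extra structure on $V$ (that $U$ splits into two pairs of equal sum $w$, with $2w=g=\sigma(T_i)$ for all $i$) is what kills types $(\mathsf{D}1)$--$(\mathsf{D}3)$ in a few lines and what pins down the adjoined element $w$ in types $(\mathsf{D}4)$--$(\mathsf{D}6)$ (e.g.\ forcing $w=f_3+f_2$ in $(\mathsf{D}4)$ and eliminating the $w=f_3+f_1+f_2$ and $w=(n+1)f_3+f_1+f_2$ subcases of $(\mathsf{D}5)$ by exhibiting explicit short zero-sums). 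Without it you would have to test every $g\in G$ against every type for the absence of short zero-sum subsequences, which is feasible in principle but is not sketched in enough detail to be checkable. Your forward direction (coordinate book-keeping in the $f_3$-direction plus reduction to $C_2^3$, and the omit-one-element analysis for $(\eta 3)$) is fine and matches the paper's verifications.
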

For $n=2$, in fact only the last case can occur as $[2,n-1]$ is empty.
To prove this result we use that the structure of all minimal zero-sum subsequences of maximal length is known (see  \cite[Theorem 3.13]{Schmid11}).

\begin{theorem}\label{Inverse_Dav}
Let $G \simeq C_2 \oplus C_2 \oplus C_{2n}$, where $n \ge 1$. 
A sequence $S$ over $G$ is a minimal zero-sum sequence of length $|S|=\mathsf{D}(G)$ if and only if there exists a basis $\{f_1,f_2, f_3\}$ of $G$, where
$\ord(f_1) = \ord(f_2) = 2$ and $\ord(f_3) = 2n$, such that $S$ is equal to one of the following sequences:
\begin{enumerate}
\item[$(\mathsf{D}1)$] 
$f^{v_3}_3 (f_3+f_2)^{v_2}(f_3+f_1)^{v_1}(-f_3+f_2+f_1)$ with $v_i \in \mathbb{N}$ odd, $v_3 \ge v_2 \ge v_1$ and $v_3 + v_2 + v_1 = 2n + 1$.
\item[$(\mathsf{D}2)$] 
$f^{v_3}_3 (f_3+f_2)^{v_2}(af_3+f_1)(-af_3+f_2+f_1)$ with $v_2, v_3 \in \mathbb{N}$ odd $v_3 \ge v_2$ and $v_2 + v_3 = 2n$ and $a \in [ 2, n - 1 ]$.
\item[$(\mathsf{D}3)$] 
$f^{2n-1}_3 (af_3 + f_2)(bf_3 + f_1)(cf_3 + f_2 + f_1)$ with $a + b + c = 2n + 1$ where $a \le b \le c$, and $a, b \in [ 2, n - 1]$, $c \in [ 2, 2n - 3 ] \setminus \{n, n + 1\}$.
\item[$(\mathsf{D}4)$] 
$f^{2n-1-2v}_3 (f_3+f_2)^{2v}f_2(af_3+f_1)\bigl((1-a)f_3+f_2+f_1\bigr)$ with $v \in [ 0, n-1 ]$ and $a \in [ 2, n-1 ]$.
\item[$(\mathsf{D}5)$] 
$f^{2n-2}_3 (af_3 + f_2)\bigl((1 - a)f_3 + f_2\bigr)(bf_3 + f_1)\bigl((1 - b)f_3 + f_1\bigr)$ with $a \ge b$ and $a, b \in [ 2, n - 1 ]$.
\item[$(\mathsf{D}6)$] 
$\prod^{2n}_{i=1}(f_3 + d_i)f_2f_1$ where $S' = \prod^{2n}_{i=1} d_i \in \mathcal{F}(\langle f_1, f_2 \rangle)$ with $\sigma(S') = f_1 + f_2$.
\end{enumerate}
\end{theorem}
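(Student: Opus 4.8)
I would prove the two implications of the equivalence separately.

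The implication that each displayed sequence is a minimal zero-sum sequence of length $2n+2$ is a direct verification, family by family. That $\sigma(S)=0$ is immediate: since $\langle f_1,f_2\rangle\cong C_2\oplus C_2$ is $2$-torsion, the odd multiplicities in $(\mathsf{D}1)$, $(\mathsf{D}2)$, $(\mathsf{D}4)$ collapse the $\langle f_1,f_2\rangle$-component of $\sigma(S)$ to a fixed element, while the coefficients of $f_3$ are arranged to sum to $0$ modulo $2n$. The point that requires care is minimality. Assuming a proper nonempty $T\mid S$ with $\sigma(T)=0$, I would project $T$ both to $C_{2n}$ along the splitting $G=\langle f_1,f_2\rangle\oplus\langle f_3\rangle$ and to $\langle f_1,f_2\rangle$, and check that the stated constraints (the ranges $a,b,c\in[2,n-1]$, the exclusion $c\notin\{n,n+1\}$, and the parities of the $v_i$) are exactly what forbids every nonzero proper subpattern downstairs.

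For the converse, let $S$ be a minimal zero-sum sequence of length $\mathsf{D}(G)=2n+2$. The approach I would take is to project onto the Klein part. Fix $\rho\colon G\to C_2\oplus C_2$ with $\ker\rho=\langle f_3\rangle\cong C_{2n}$ for some $f_3$ of order $2n$. Since $\mathsf{D}(C_2\oplus C_2)=3$, decompose $\rho(S)=W_1\cdots W_m$ into minimal zero-sum blocks over $C_2\oplus C_2$, so that each $W_k$ is one of $0$, $h^2$ with $h\neq 0$, or $f_1f_2(f_1+f_2)$, and in particular $|W_k|\le 3$. Lifting to disjoint subsequences $V_1,\dots,V_m$ with $V_1\cdots V_m=S$, each block sum $b_k:=\sigma(V_k)$ lies in $\ker\rho\cong C_{2n}$. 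Because $S$ is minimal, no sub-product of the $V_k$ can be zero-sum, so the sequence $b_1\cdots b_m$ is itself a minimal zero-sum sequence over the cyclic group $C_{2n}$; in particular $m\le\mathsf{D}(C_{2n})=2n$.

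The pair consisting of this cyclic minimal zero-sum sequence $(b_k)_k$ and the block types upstairs then determines $S$, and I would use it to produce the six families. The cyclic inverse results are the engine: removing one element from a minimal zero-sum sequence over $C_{2n}$ of length $2n$ (respectively $2n-1$) leaves a zero-sum-free sequence of length $2n-1=\eta(C_{2n})-1$ (respectively $2n-2$), so by Theorem \ref{Inverse_eta_cyc} that sequence is $g^{2n-1}$ (respectively $g^{2n-2}$ or $g^{2n-3}(2g)$), whence $b_1\cdots b_m$ equals $g^{2n}$ (respectively $g^{2n-2}(2g)$) for a generator $g$ of $C_{2n}$; more generally $(b_k)_k$ has the shape $g^{k}(2g)^{l}$ with $k+2l=2n$. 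For each such shape I would run through the admissible blocks and compatible liftings: a length-$1$ block gives a single pure element $cf_3$, a length-$2$ block $h^2$ gives $(h+sf_3)(h+(c-s)f_3)$ with $b_k=cf_3$, and a length-$3$ block gives $(f_1+s_1f_3)(f_2+s_2f_3)(f_1+f_2+s_3f_3)$ with $(s_1+s_2+s_3)f_3=b_k$. Imposing $\sigma(S)=0$, the parities forced by $2$-torsion, and global minimality then recovers precisely the patterns $(\mathsf{D}1)$–$(\mathsf{D}6)$ after renaming the basis.

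I expect the main obstacle to be twofold. First, controlling the cyclic block sequence $(b_k)_k$: the two cited inverse results pin it down only when $m$ is close to $2n$, whereas $m$ may be as small as $n$, so one must show — using the finer structure theory of long minimal zero-sum sequences over cyclic groups, together with the strong restriction that each block upstairs has length at most $3$ — that $(b_k)_k$ cannot be an exotic minimal zero-sum sequence but must have the form $g^k(2g)^l$. Second, and more delicate, is translating the condition that $S$ has no proper zero-sum subsequence — including subsequences that cut across several blocks — into the precise arithmetic relations among the coefficients $s_i$; it is exactly these cross-block conditions that produce the ranges $a,b,c\in[2,n-1]$ and the exclusions, and they must be shown both necessary and, without redundancy, to exhaust the six families. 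Finally the degenerate regime would be handled directly: small $n$, where $[2,n-1]$ may be empty, and the base group $C_2^3$, whose minimal zero-sum sequences of length $4$ are elementary to list.
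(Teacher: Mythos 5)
First, a point of context: the paper does not prove this statement at all --- Theorem \ref{Inverse_Dav} is imported from \cite[Theorem 3.13]{Schmid11}, where its proof occupies a substantial part of a separate paper. So your sketch must stand on its own, and it has a genuine gap at precisely the point you flag as your ``main obstacle''. Your engine is the claim that the block-sum sequence $b_1\cdots b_m$ over $C_{2n}$ must have the shape $g^k(2g)^l$. For an arbitrary decomposition of $\rho(S)$ into minimal Klein blocks this is simply false, and it cannot be extracted from Theorem \ref{Inverse_eta_cyc} or any Savchev--Chen-type result, since those control minimal zero-sum sequences over $C_{2n}$ only when their length is close to $2n$ (in any case larger than $n$), whereas $m$ can be as small as $\lceil (2n+2)/3 \rceil$ --- even below the bound $n$ you concede. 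Concretely, take $n=5$ and
\[
S=(f_3+f_1+f_2)^4\,(f_3+f_1)^3\,(f_3+f_2)^3\,f_1 f_2 ,
\]
which is of type $(\mathsf{D}6)$ with $S'=(f_1+f_2)^4 f_1^3 f_2^3$ and $\sigma(S')=f_1+f_2$, hence by the theorem a minimal zero-sum sequence of length $12$. Decomposing $\rho(S)$ into four blocks of length $3$, each lifting to a triple containing one element from each nonzero Klein class, yields block sums $3f_3,\,3f_3,\,2f_3,\,2f_3$; and $3\cdot 3\cdot 2\cdot 2$ is a minimal zero-sum sequence over $C_{10}$ (no proper nonempty subset sums to $0$ modulo $10$) that is not of the form $g^k(2g)^l$ for any $g$. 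Thus ``exotic'' block sequences genuinely arise from the very sequences you are classifying. What is true is only that \emph{some} decomposition has a smooth block-sum sequence, and proving the existence of such a well-chosen decomposition --- rather than fixing one arbitrarily, as you do --- is essentially where the whole difficulty of the theorem lives.

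A second, independent shortfall: minimality of $(b_k)_k$ only excludes zero-sum subsequences of $S$ that are unions of entire blocks $V_k$. Subsequences cutting across blocks are what actually force the constraints $a,b\in[2,n-1]$, $c\notin\{n,n+1\}$, the parity conditions on the $v_i$, and $\sigma(S')\notin\supp(S')$-type conditions; your sketch offers no mechanism for these beyond ``imposing global minimality'', which is the bulk of the work, not a finishing touch. (Compare how much effort the paper's proof of Theorem \ref{Inverse_eta} spends on exactly such cross-checks, e.g.\ ruling out the decompositions $1+a\equiv b+c$, even though it starts from the \emph{known} Theorem \ref{Inverse_Dav}.) There is also a normalization issue you wave away with ``after renaming the basis'': your projection $\rho$ with kernel $\langle f_3\rangle$ is fixed in advance, but the basis asserted by the theorem must be adapted to $S$. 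In summary, your skeleton --- project to $C_2\oplus C_2$, decompose into blocks of length at most $3$, pass to the cyclic kernel --- is the natural inductive-method opening and matches the spirit of the arguments this paper uses for Theorems \ref{Inverse_eta} and \ref{Inverse_EGZ}; but of the two steps you defer, one is false as stated and the other is unaddressed, so the proposal does not constitute a proof, which is presumably why the authors cite \cite{Schmid11} rather than reprove the result.
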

As above, for $n\le 2$, only some of these cases occur, namely  the first and the last. 
We now prove our result. 

\begin{proof}[Proof of Theorem \ref{Inverse_eta}]
Let $G \simeq C_2 \oplus C_2 \oplus C_{2n}$ where $n \geq 2$, and  let $H \simeq C_n$ be the subgroup of $G$ such that $G/H \simeq C_2^3$. Note that there is indeed a unique such subgroup $H$. 
We use the inductive method with 
\[
H \hookrightarrow G \overset{\pi} \rightarrow G/H.
\]
Let $S$ be a sequence over $G$ such that $\left|S\right| = \eta(G)-1=2n+3$ that contains no short zero-sum subsequence.

\medskip
First, let us prove that $S$ contains no element $g$ in $H$. Indeed, if it were the case, setting  $g=T_0$, we have $ST_0^{-1}$ whose length is $|ST_0^{-1}|=2n+2 = \eta(G/H)+2(n-3)$ would contain $n-2$ disjoint nonempty subsequences $T_1,\dots,T_{n-2} $ such that $\left|T_i\right| \leq 2$ and $\sigma\left(T_i\right) \in H$ for all $i \in [1,n-2]$.  In particular, $S = T_0T_1\cdots T_{n-2}S_0$ where $S_0$ has length $|S_0|\ge  6$. 

Now, note that all elements of $\pi(S_0)$ must be nonzero and distinct, otherwise we could extract from $S_0$ yet another subsequence $T_{n-1} \mid S_0$ satisfying $|T_{n-1}| \le 2$ and $\sigma\left( T_{n-1} \right) \in H$. 
Then,  we have $T=\prod^{n-1}_{i=0} \sigma(T_i)$ is a sequence over $H$ of length $|T|=n$, so it contains a nonempty zero-sum subsequence, say $\sum_{i\in I} \sigma(T_i) = 0$ with $\emptyset \neq I \subset [0,n-1]$. Thus, $\prod_{i \in I}T_i$ is a subsequence of $S$ with sum $0$ and length at most $2 |I|\le 2n$.  

However, a squarefree sequence of at least $6$ nonzero elements over $G/H \simeq C_2^3$ has a zero-sum subsequence of length at most $3$ (see Lemma \ref{sum_le3}).  Applying this to $\pi(S_0)$ gives that $S_0$  contains a subsequence $T_{n-1}$ of length $|T_{n-1}| =  3$ such that $\sigma\left(T_{n-1}\right) \in H$. Then, again $T=\prod^{n-1}_{i=0} \sigma(T_i)$ is a sequence over $H$ of length $|T|=n$. It contains a nonempty zero-sum subsequence, and whence $\prod^{n-1}_{i=0} T_i$ contains a nonempty zero-sum subsequence. Since  $|\prod^{n-1}_{i=0} T_i|\le 2n$, it follows that this is a short zero-sum subsequence. Since $\prod^{n-1}_{i=0} T_i$ is a subsequence of $S$, we see that $S$ contains a short zero-sum subsequence, which is a contradiction.

\medskip
Thus, we know that $S$ contains no element in $H$.  
Similarly as above, since $\left|S\right| = 2(n-3) + 9 \ge  \eta(G/H)+2(n-3)$, there exist $n-2$ disjoint subsequences $T_1,\dots,T_{n-2}$ of $ S$ such that $\sigma\left(T_i\right) \in H$ and $\left|T_{i}\right| \le 2$ for all $i \in [1,n-2]$; yet, since $S$ contains no element from $H$ in fact  $\left|T_{i}\right| = 2$  for all $i \in [1,n-2]$. In particular, $S = T_1\cdots T_{n-2}S_0$ where $S_0$ has length $|S_0|=7$. 

\medskip
We now try to obtain further information on both $S_0$ and the sequences $T_i$ for  $i \in  [1, n-2]$.
On the one hand,  at most one of the elements in $\pi(S_0)$  has multiplicity at least $2$ and none has multiplicity at least $4$. To see this it suffices to note that  otherwise we could extract from $S_0$ two disjoint subsequences $T_{n-1}$ and $T_n$ of length $2$ whose sums are in $H$. Then, arguing as above, $S$ would contain a short zero-sum subsequence, a contradiction. In particular, we have $|\supp(\pi(S_0))| \ge 5$.

On the other hand, $T=\prod^{n-2}_{i=1} \sigma(T_i)$ is a sequence over $H$ of length $|T|=n-2$, containing no nonempty zero-sum subsequence, otherwise $S$ would contain a short zero-sum subsequence, a contradiction.

By Theorem \ref{Inverse_eta_cyc} we get  $|H \setminus -\left( \Sigma\left(T\right) \cup \{0\}\right)|\le 1$. 
 Since $|\supp(\pi(S_0))| \ge 5$, one can extract from $S_0$ a subsequence $U \mid S_0$ such that $\left|U\right| \le 4$ and $\sigma\left(U\right) \in H$ (see Lemma \ref{sum_le3}); in fact, we could even find a sequence of length at most $3$. Since $S$ contains no short zero-sum subsequence, it follows that $\sigma\left(U\right) \in H \setminus -\left(\Sigma\left(T\right) \cup \{0\}\right)$.

\medskip
For $n=2$, this directly yields that $\sigma(U)=g$ where $g$ is the nonzero element  of $H$.
For $n=3$, we first infer that $H \setminus -\left(\Sigma\left(T\right) \cup \{0\}\right) \neq \emptyset$, which by Theorem \ref{Inverse_eta_cyc} implies that $T = g^{n-2}$ for some $g\in H$ with $\ord(g)=n$, and $\sigma(U)=g$.  
It follows that  there exists an element $g \in H$ satisfying $\ord(g)=n$ such that $S$ can be decomposed as $T_1 \cdots T_{n-2} S_0$ where $\sigma(T_i)=g$ for all $i \in [1,n-2]$, and for each $U \mid S_0$ such that $\left|U\right| \le 4$ and $\sigma\left(U\right) \in H$ one has $\sigma(U)= g$. 

\medskip
\textbf{Case 1.} $\pi(S_0)$ is squarefree.
Then, $S_0$ itself is squarefree and $\supp(\pi(S_0)) =G/H \setminus  \{0\}$.
We now fix a basis $\{e_1, e_2, e_3 \}$ of $G/H$; this quotient group is isomorphic to $C_2^3$. 
For every set $\emptyset \neq I \subset \{1,2,3\}$, we write $e_I=\sum_{i\in I}e_i$ and we denote by $a_I$ the \emph{unique} element of $S_0$ such that $\pi(a_I)=e_I$.  We have $U_0=a_{\{1,2\}}a_{\{2,3\}}a_{\{1,3\}}$, $U_k=a_ia_ja_{\{i,j\}}$ and $V_i=a_{\{i, j\}}a_{\{i, k\}}a_j a_k$ for $\{i ,j, k\}=\{1,2,3\}$ are subsequences of $S_0$ satisfying $|U_k|=3$ and $\sigma(U_k) \in H$ as well as $|V_i|=4$ and $\sigma(V_i) \in H$.
In addition, we have $U_0 U_1 U_2U_3=V_1 V_2 V_3$.

\medskip
By the argument above it follows that $\sigma(U_k)=\sigma(V_i)=g$ for all $i,k \in [1,3]$, which yields $4g=\sigma(U_0U_1U_2U_3)=\sigma(V_1V_2V_3)=3g$. It follows that $g=0$, which is a contradiction.

\medskip
\textbf{Case 2.} $\pi(S_0)$ is not squarefree. We recall that there is a unique element with multiplicity at least $2$. 
In this case, $S_0$ contains a subsequence $T_{n-1} \mid S_0$ such that $\left|T_{n-1}\right| = 2$ and $\sigma\left(T_{n-1}\right) \in H$.
We get again $\sigma(T_{n-1}) = g$. 
Thus  $S = T_1\cdots T_{n-1}S'_0$ where $\sigma\left(T_i\right)=g$ for all $i \in [1,n-1]$ and $S'_0$ has length $|S'_0|=5$.
We know that $\pi(S'_0)$ is squarefree and does not contain $0$.

\medskip
By Lemma \ref{sum_=4}, $S'_0$ contains a unique subsequence $U=stuv \mid S'_0$ such that $\left|U\right| = 4$ and $\sigma\left(U\right) \in H$. 
First, we have $\sigma(U)=s+t+u+v=g$.
Now, setting $S'_0=Uw$, and since $\pi(s)+\pi(t),\pi(s)+\pi(u),\pi(s)+\pi(v)$ are nonzero and pairwise distinct elements of the set $G/H \setminus \{0,\pi(s),\pi(t),\pi(u),\pi(v)\}$ containing $\pi(w)$, exactly one of them, say $\pi(s)+\pi(t)$, is equal to $\pi(w)$. Thus $\pi(w)=\pi(s)+\pi(t)=\pi(u)+\pi(v)$.
Now, $U_1=wst$ and $U_2=wuv$ are two subsequences of $S'_0$ satisfying $|U_1|=|U_2|=3$ and $\sigma(U_1),\sigma(U_2) \in H$.
This yields $\sigma(U_1)=w+s+t=g$ and $\sigma(U_2)=w+u+v=g$, so that $s+t=u+v$. Since also $s+t+u+v=g$, it follows that $2w = g$ and $w = s+t=u+v$.

\medskip
In addition, $V=T_1\cdots T_{n-1}U$ is a zero-sum subsequence of $S$ of length $|V|=2(n-1)+4=2n+2=\mathsf{D}(G)$. Since $S$ contains no short zero-sum subsequence, it cannot be decomposed into two zero-sum subsequences, as one of them would be short. That is, $V$ is a minimal zero-sum sequence over $G$ of length $|V|=\mathsf{D}(G)$. 

\medskip
Let us now give a summary of everything we proved so far.
For every sequence $S$ over $G$ such that $\left|S\right| = 2n+3$ and containing no short zero-sum subsequence, there exists an element $g \in H$ with $\ord(g)=n$ such that $S$ can be decomposed as $T_1\cdots T_{n-1}S'_0$ where $T_1,\dots,T_{n-1}$ are subsequences of $S$  that satisfy $\left|T_i\right| = 2$ and $\sigma\left(T_i\right)=g$ for all $i \in [1,n-1]$, and where $S'_0$ is a sequence of length $|S'_0|=5$.
Moreover, in every such decomposition of $S$, the sequence $\pi(S'_0)$ is squarefree and does not contain $0$, more precisely the sequence $S_0'$ is squarefree and can be uniquely decomposed as $S'_0=Uw$ where $U \mid S'_0$ satisfies $|U|=4$ and $\sigma(U) \in H$. 
Also, we can write $U=stuv$ so that the equalities $s+t=u+v=w$ and $\sigma(U)=2w=g$ hold. 
Finally, $V=T_1\cdots T_{n-1}U$ is a minimal zero-sum sequence over $G$ of length $|V|=2n+2=\mathsf{D}(G)$.
 
\medskip
In particular, using Theorem \ref{Inverse_Dav}, we know there exists a basis $\{f_1,f_2, f_3\}$ of $G$, where $\ord(f_1) = \ord(f_2) = 2$ and $\ord(f_3) = 2n$, such that $V$ has one out of six possible forms. We treat each of these cases separately; we recall that for $n=2$ only the first and the last can occur, so that for the others we can assume that $n \ge 3$. 

\medskip
If $V$ is of type $(\mathsf{D}1)$, then in order to have a support of size at least five, $S'_0$ must contain one copy of every member of $\supp(V)$. 
Since the four elements of $\supp(V)$ sum up to $2f_3 \in H$, we obtain $U=f_3 (f_3+f_2) (f_3+f_1) (-f_3+f_2+f_1)$, but it is easily seen that $U$ cannot be decomposed as the product of two sequences of length $2$ having the same sum.

\medskip
If $V$ is of type $(\mathsf{D}2)$, then in order to have a support of size at least five, $S'_0$ must contain one copy of every member of $\supp(V)$. 
Since the four elements of $\supp(V)$ sum up to $2f_3 \in H$, we obtain $U=f_3 (f_3+f_2) (af_3+f_1)(-af_3+f_2+f_1)$ with $a \in [ 2, n - 1 ]$. 
In addition, $U$ can be decomposed as the product of two sequences of length $2$ having the same sum only if $1-a=1+a \pmod{2n} $, that is to say only if $a \in \{0,n\}$, which is a contradiction. 

\medskip
If $V$ is of type $(\mathsf{D}3)$, then in order to have a support of size at least five, $S'_0$ must contain one copy of every member of $\supp(V)$. 
Since the four elements of $\supp(V)$ sum up to $2f_3 \in H$, we obtain $U=f_3 (af_3 + f_2)(bf_3 + f_1)(cf_3 + f_2 + f_1)$ with $a + b + c = 2n + 1$ where $a \le b \le c$, and $a, b \in [ 2, n - 1]$, and $c \in [ 2, 2n - 3 ] \setminus \{n, n + 1\}$. 
Now, $U$ can be decomposed as the product of two sequences of length $2$ having the same sum only if $1+a=b+c \pmod{2n}$ or $1+b=a+c  \pmod{2n}$  or $1+c=a+b  \pmod{2n}$; recall that $a+b+c = 1  \pmod{2n}$. This is possible only if $a \in \{0,n\}$ or $b \in \{0,n\}$ or $c \in \{0,n\}$, which yields a contradiction.

\medskip
If $V$ is of type $(\mathsf{D}4)$, then in order to have a support of size at least five, $S'_0$ must contain at least four elements of $\supp(V)$.
First, note that $S'_0$ contains at most one copy of $f_3$ and at most one copy of $f_3+f_2$. In other words, $T_1\cdots T_{n-1}$ contains at least $2n-2v-2$ copies of $f_3$ and at least $2v-1$ copies of $f_3+f_2$.
Assume that there exists no $i \in [1, n-1]$ satisfying $T_i=f^2_3$ or $T_i=(f_3+f_2)^2$.
Since $(2n-2v-2)+(2v-1)=2n-3 \ge n$, there exists $i \in [1,n-1]$ satisfying $S_i=f_3(f_3+f_2)$ so that $g=2f_3+f_2$. Yet, this is not an element of $H$, which is a contradiction. It follows that $T_i=f^2_3$ or $T_i=(f_3+f_2)^2$ for at least one $i \in [1,n-1]$ which yields $g=2f_3$.
In particular, for every $i \in [1,n-1]$ such that $T_i$ contains $(f_3+f_2)$, we have $T_i=(f_3+f_2)^2$ so that $T_1\cdots T_{n-1}$ contains an even number of copies of $f_3+f_2$. 
Therefore, the number of copies of $f_3+f_2$ contained in $S'_0$, which is at most one, must be zero. 
We thus obtain $U=f_3 f_2(af_3+f_1)\bigl((1-a)f_3+f_2+f_1\bigr)$ with $a \in [ 2, n-1 ]$.

\medskip
Since $1+a \neq 1-a$ (mod $2n$) and $1+(1-a) \neq a  \pmod{2n}$, there is only one possible decomposition of $U$ as the product of two sequences of length $2$ having the same sum, which yields $w=f_3+f_2$. Therefore,
\[
S=f^{2n-1-2v}_3 (f_3+f_2)^{2v+1}f_2(af_3+f_1)\bigl((1-a)f_3+f_2+f_1\bigr)
\]
with $v \in [ 0, n-1 ]$ and $a \in [ 2, n-1 ]$. 
It remains to check that this sequence contains no short zero-sum subsequence.
Indeed, since $V$ is a minimal zero-sum sequence over $G$, any short zero-sum subsequence $S' \mid S$ must satisfy $(f_3+f_2)^{2v+1} \mid S'$. 
Now, if $S'$ contains neither $(af_3+f_1)$ nor $((1-a)f_3+f_2+f_1)$, we must have $S'=f^{2n-1-2v}_3(f_3+f_2)^{2v+1}f_2$ so that $|S'|=2n+1 > 2n$ which is a contradiction. If $S'$ contains either $(af_3+f_1)$ or $\bigl((1-a)f_3+f_2+f_1\bigr)$ then it must contain both of them, which yields $S'=f^{2n-2-2v}_3 (f_3+f_2)^{2v+1}(af_3+f_1)\bigl((1-a)f_3+f_2+f_1\bigr)$ so that $|S'|=2n+1 > 2n$, which is a contradiction.

\medskip
If $V$ is of type $(\mathsf{D}5)$, then in order to have a support of size at least five, $S'_0$ must contain at least four elements of $\supp(V)$.
Since $S'_0$ contains at most one copy of $f_3$, the sequence $T_1\cdots T_{n-1}$ contains at least $2n-3 \ge n$ copies of $f_3$.
This implies that every $T_i$ contains at least one copy of $f_3$ and that at most one of them is different from $f^2_3$.
This yields $g=2f_3$ so that $T_i = f^2_3$ for every $i \in [1,n-1]$.
Therefore, the elements $(af_3 + f_2), \bigl((1 - a)f_3 + f_2\bigr), (bf_3 + f_1)$ and $\bigl((1 - b)f_3 + f_1\bigr)$ belong to $S'_0$ and since their sum is equal to $2f_3 \in H$, we have $U=(af_3 + f_2) \bigl((1 - a)f_3 + f_2\bigr) (bf_3 + f_1)\bigl((1 - b)f_3 + f_1\bigr)$ with $a \ge b$ and $a, b \in [ 2, n - 1 ]$. 

\medskip
Now, let $w=\alpha f_3+d$ where $\alpha \in [0,2n-1]$ and $d \in \langle f_1, f_2 \rangle$.
On the one hand, $2w=g=2f_3$ yields $\alpha \in \{1,n+1\}$.
On the other hand, since $w$ must be equal to the sum of two elements of $U$, we have $d \in \{0,f_1+f_2\}$.  

\medskip
In case $d=0$, we get that $\alpha=1$ which yields $w=f_3$ so that
\[
S=f^{2n-1}_3 (af_3 + f_2)\bigl((1 - a)f_3 + f_2\bigr)(bf_3 + f_1)\bigl((1 - b)f_3 + f_1\bigr)
\]
with $a \ge b$ and $a, b \in [ 2, n - 1 ]$. 
It remains to check that this sequence contains no short zero-sum subsequence.
Indeed, since $V$ is a minimal zero-sum sequence over $G$, any short zero-sum subsequence $S' \mid S$ must satisfy $f^{2n-1}_3 \mid S'$, which implies $|S'|=2n$ so that $S'=f^{2n}_3 \nmid S$, a contradiction.

\medskip
In the case $d=f_1+f_2$ and $\alpha=1$, we have $w=f_3+f_2+f_1$. 
Since $a+b=(1-a)+(1-b) \pmod{2n}$ if and only if $a+b=n+1 \neq 1 \pmod{2n}$, we have 
$a+(1-b)=(1-a)+b  \pmod{2n} $  that is to say $a=b$.
We thus obtain
\[
S=f^{2n-2}_3 (af_3 + f_2)\bigl((1 - a)f_3 + f_2\bigr)(af_3 + f_1)\bigl((1 - a)f_3 + f_1\bigr)(f_3+f_2+f_1)
\]
with $a\in [ 2, n - 1 ]$.
Yet, $S'=f^{2n-2a-1}_3 (af_3 + f_2)(af_3 + f_1)(f_3+f_2+f_1) \mid S$ is a nonempty zero-sum subsequence of length $|S'|=2n-2a+2 \le 2n$. 

\medskip
In the case $d=f_1+f_2$ and $\alpha=n+1$, we have $w=(n+1)f_3+f_2+f_1$.
Since $a+(1-b)=(1-a)+b  \pmod{2n}$  if and only if $a=b$ which implies $a+1-b=1 \neq n+1 \pmod{2n}$, we have
$a+b=(1-a)+(1-b)$ (mod $2n$) if and only if $a+b=n+1 \pmod{2n}$ .
We thus obtain
\[S=f^{2n-2}_3 (af_3 + f_2)\bigl((1 - a)f_3 + f_2\bigr)\bigl((n+1-a)f_3 + f_1\bigr)\bigl((a - n)f_3 + f_1\bigr)\bigl((n+1)f_3+f_2+f_1\bigr)\]
with $a \in [ 2, n - 1 ]$.
Yet, $S'=f^{2n-2a-1}_3 (af_3 + f_2)\bigl((a-n)f_3 + f_1\bigr)\bigl((n+1)f_3+f_2+f_1\bigr) \mid S$ is a nonempty zero-sum subsequence of length $|S'|=2n-2a+2 \le 2n$.

\medskip
If $V$ is of type $(\mathsf{D}6)$, we have $V=\prod^{2n}_{i=1}(f_3 + d_i)f_2f_1$ where $\prod^{2n}_{i=1} d_i \in \mathcal{F}(\langle f_1, f_2 \rangle)$ is such that $\sum^{2n}_{i=1} d_i = f_1 + f_2$. 
Then $w=f_3+d$ for some $d \in \langle f_1, f_2 \rangle$ so that we can write $S=\prod^{2n+1}_{i=1}(f_3 + d_i)f_2f_1$ where $S'=\prod^{2n+1}_{i=1} d_i \in \mathcal{F}(\langle f_1, f_2 \rangle)$. 
It is easily seen that such a sequence $S$ contains no short zero-sum subsequence if and only if $S'=\prod^{2n+1}_{i=1} d_i \in \mathcal{F}(\langle f_1, f_2 \rangle)$ contains no zero-sum subsequence of size $2n$, that is to say if and only if $\sigma(S') \notin \supp(S')$.
\end{proof}

\section{Inverse problem associated to $\mathsf{s}\left(C_2 \oplus C_2 \oplus C_{2n}\right)$ for $n \geq 2$}

We turn to the inverse problem associated to $\mathsf{s}\left(C_2 \oplus C_2 \oplus C_{2n}\right)$ for $n \ge 2$. Again, the case $n=1$, that is, $C_2^3$, is different yet well-known and direct; $\mathsf{s}(C_2^3)-1 =8$ and the only example of a sequence of length $8$ without zero-sum subsequence of length $2$ is the squarefree sequence of all elements, because the only zero-sum sequences of length $\exp(G)$ are $g^2$ for $g \in C_2^r$. The proof of this result uses Theorem \ref{Inverse_eta}. 

\begin{theorem}\label{Inverse_EGZ} 
Let $G \simeq C_2 \oplus C_2 \oplus C_{2n}$, where $n \ge 2$. 
A sequence $S$ over $G$ of length $|S|=\mathsf{s}(G)-1=4n+2$ contains no zero-sum subsequence of length $\exp(G)$ if and only if there exist a basis  $\{f_1,f_2, f_3\}$ of $G$, where
$\ord(f_1) = \ord(f_2) = 2$ and $\ord(f_3) = 2n$, and an $f \in G$ such that $-f+S$ is equal to one of the following sequences:
\begin{enumerate}
\item[$(\mathsf{s}1)$] 
$0^{2\alpha+1} f^{2n-2\alpha-1}_2 f^{2n-1-2\beta }_3 (f_3+f_2)^{2\beta +1}(af_3+f_1)\bigl((1-a)f_3+f_2+f_1\bigr)$ with  $a \in [ 2, n-1 ]$ and $\alpha, \beta \in [0,n-1]$.
\item[$(\mathsf{s}2)$] 
$0^{2n-1}f^{2n-1}_3 (af_3 + f_2)\bigl((1 - a)f_3 + f_2\bigr)(bf_3 + f_1)\bigl((1 - b)f_3 + f_1\bigr)$ with $a, b \in [ 2, n - 1 ]$ and $a \ge b$.
\item[$(\mathsf{s}3)$] 
$0^{2\alpha+1} f^{2\beta+1}_1 f^{2\gamma+1}_2 \prod^{2n+1}_{i=1}(f_3 + d_i)$
where $\alpha,\beta,\gamma \in [0,n-1]$ are such that $\alpha+\beta+\gamma=n-1$, $S' = \prod^{2n+1}_{i=1} d_i \in \mathcal{F}(\langle f_1, f_2 \rangle)$ and $\sigma(S') \notin \supp(S')$.
\end{enumerate}
\end{theorem}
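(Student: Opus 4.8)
The plan is to prove the two implications separately, using the solution of the $\eta$-problem (Theorem \ref{Inverse_eta}) as the backbone of the harder, ``only if'', direction. For the ``if'' direction I would first record that the property of having no zero-sum subsequence of length $\exp(G)=2n$ is invariant under translation: if $T$ is a subsequence of $S$ of length $2n$ and $T_f$ is the corresponding subsequence of $-f+S$, then $\sigma(T_f)=\sigma(T)-2nf=\sigma(T)$, because $2nf=\exp(G)f=0$. Hence it suffices to check that each of the three displayed sequences $-f+S$ has no zero-sum subsequence of length $2n$. In each case the sequence factors as $W\cdot R$, where $R$ is exactly the corresponding $\eta$-extremal sequence of Theorem \ref{Inverse_eta} (type $(\eta 1)$, $(\eta 2)$, $(\eta 3)$ respectively) and $W$ is a zero-sum sequence of length $2n-1$ supported on the subgroup $\langle f_1,f_2\rangle\simeq C_2^2$. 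Since $R$ has no short zero-sum subsequence, a hypothetical length-$2n$ zero-sum of $W\cdot R$ cannot lie entirely in $R$; I would split it according to its multiplicities on $0,f_1,f_2$ and on the $\eta$-core and derive a contradiction from a parity/counting inequality. For $(\mathsf{s}2)$ this is immediate (one would need $2n$ copies of $0$ among only $2n-1$ available), while for $(\mathsf{s}1)$ the obstruction is precisely that $2n$ cannot be written as $2j+k$ with $k$ at most the available odd number of zeros; these are finite, essentially one-dimensional verifications.

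For the ``only if'' direction, let $S$ have length $4n+2$ and no zero-sum subsequence of length $2n$. The goal is to exhibit a factorization $S=W\cdot R$ in which $R$ has length $\eta(G)-1=2n+3$ and contains no short zero-sum subsequence, so that Theorem \ref{Inverse_eta} pins down $R$, together with a length-$(2n-1)$ complement $W$. To produce $R$ I would run the inductive method through $H\hookrightarrow G\to G/H\simeq C_2^3$, exactly as in the proof of Theorem \ref{Inverse_eta}, extracting disjoint subsequences of length at most $2$ whose sums lie in $H\simeq C_n$; the images of these sums form a sequence over $H$ of length close to $\mathsf{s}(C_n)-1=2n-2$, and the hypothesis that $S$ has no length-$2n$ zero-sum translates into a constraint forbidding certain length-$n$ zero-sums over $H$, which is precisely the regime controlled by the cyclic inverse result, Theorem \ref{Inverse_EGZ_cyc}. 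This forces the extracted sums into a $g^{n-1}h^{n-1}$ pattern and, after translating so that the relevant element becomes $0$, isolates the $\eta$-extremal core $R$ and the complementary block $W$.

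The main obstacle is the precise length bookkeeping, which is where the ``complex link'' between the two problems lives. Unlike the rank-two and homocyclic cases, the $2n-1$ extra elements forming $W$ do not simply accumulate on a single element: they spread across $\langle f_1,f_2\rangle$ subject to the cyclic constraint, and $S$ itself may contain elements of $H$ (the zeros), so the extracted pieces are not all of length $2$ and the reduction must be carried out with care. Concretely, one must show that a maximal short-zero-sum-free subsequence of $S$ has length \emph{exactly} $2n+3$, equivalently that $W$ cannot be enlarged to length $2n$ without creating a zero-sum of length exactly $2n$; this exact-length control, rather than the mere ``at most $2n$'' control sufficient for the $\eta$-problem, is the crux, since a product of short zero-sums of total length $2n$ need not a priori contain one of length precisely $2n$. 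Once $R$ is identified with one of $(\eta 1)$--$(\eta 3)$ via Theorem \ref{Inverse_eta}, I would finish by a case analysis over these three forms, determining in each case all length-$(2n-1)$ zero-sum blocks $W$ that can be adjoined without producing a length-$2n$ zero-sum; matching the resulting free parameters (the integers $\alpha,\beta,\gamma,a,b$ and the sequence $S'$ over $\langle f_1,f_2\rangle$) then yields exactly the families $(\mathsf{s}1)$, $(\mathsf{s}2)$, $(\mathsf{s}3)$.
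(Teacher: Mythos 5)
Your plan follows the paper's proof essentially step for step: the inductive method through $H \simeq C_n$ with quotient $G/H \simeq C_2^3$, the cyclic inverse result (Theorem \ref{Inverse_EGZ_cyc}) pinning the extracted sums to a $g^{n-1}h^{n-2}$ pattern, a translation isolating an $\eta$-extremal core of length $2n+3$ that Theorem \ref{Inverse_eta} then classifies, and a concluding case analysis over $(\eta 1)$--$(\eta 3)$ to constrain the complementary block. The only remark worth adding is that the paper resolves the ``exact length $2n$'' difficulty you rightly flag as the crux by keeping every extracted piece of length exactly $2$ (via $\mathsf{s}(G/H)$ rather than $\eta(G/H)$) and reserving a single spare $0$ to adjust parity, so the pieces are in fact all of length $2$, contrary to your aside.
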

As in the result for the $\eta$-constant, for $n=2$ only the last case can occur.
Before giving the proof of this result we put the result in context and derive two corollaries.
The first is about the height of these extremal sequences, that is, the maximal multiplicity of an element in these sequences.  

\begin{corollary}
Let $G \simeq C_2 \oplus C_2 \oplus C_{2n}$, where $n \ge 2$. 
For every sequence $S$ over $G$ of length $|S|=\mathsf{s}(G)-1$ without zero-sum subsequence of length $\exp(G)$ one has
\[
\mathsf{h}(S) \ge 
\begin{cases} 
\frac{2n+3}{3} & \text{ if } n \equiv 0 \pmod{3}\\  
\frac{2n+1}{3} & \text{ if } n \equiv 1 \pmod{3}\\  
\frac{2n+5}{3} & \text{ if } n \equiv 2 \pmod{3}
 \end{cases}\] 
and these bounds are attained. 
\end{corollary}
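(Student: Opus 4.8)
The plan is to exploit the explicit classification in Theorem \ref{Inverse_EGZ}. Since the height is invariant under translation, i.e. $\mathsf{h}(S) = \mathsf{h}(-f+S)$, it suffices to examine the three normal forms $(\mathsf{s}1)$, $(\mathsf{s}2)$, $(\mathsf{s}3)$: for each family I would read off the multiplicities of the elements that occur, determine the smallest height achievable within that family, and then take the minimum of the three. The corollary follows once I show that the overall minimum is realized in family $(\mathsf{s}3)$ and equals the stated quantity.

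The combinatorial heart, which I would isolate first, is an elementary fact: among all ways of writing $2n+1$ as an unordered sum of three positive odd integers, the least possible value of the largest summand is $B_n := 2\lceil (n-1)/3\rceil + 1$, and a direct check shows this equals $(2n+3)/3$, $(2n+1)/3$, $(2n+5)/3$ according as $n \equiv 0, 1, 2 \pmod 3$. The proof is immediate after the substitution $x_i = 2y_i+1$: minimizing $\max_i x_i$ over positive odd $x_i$ with $x_1+x_2+x_3 = 2n+1$ is the same as minimizing $\max_i y_i$ over nonnegative integers with $y_1+y_2+y_3 = n-1$, whose value is $\lceil (n-1)/3\rceil$.

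For the lower bound I would treat the three families in turn. In $(\mathsf{s}3)$ the elements $0$, $f_1$, $f_2$ have zero $f_3$-coordinate, hence are pairwise distinct and distinct from every $f_3+d_i$; their multiplicities $2\alpha+1,\, 2\beta+1,\, 2\gamma+1$ are three positive odd integers summing to $2n+1$ (as $\alpha+\beta+\gamma = n-1$), so $\mathsf{h}(S) \ge \max(2\alpha+1,2\beta+1,2\gamma+1) \ge B_n$ by the fact above. In $(\mathsf{s}1)$ the multiplicities of $0$ and $f_2$ (and likewise those of $f_3$ and $f_3+f_2$) sum to $2n$, forcing $\mathsf{h}(S)\ge n$, and $n \ge B_n$ holds for all $n \ge 3$; in $(\mathsf{s}2)$ one has $\mathsf{h}(S) = 2n-1 \ge B_n$ for $n\ge 2$. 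Since for $n=2$ only $(\mathsf{s}3)$ occurs, these two sub-cases are vacuous there. Hence $\mathsf{h}(S) \ge B_n$ for every extremal $S$.

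Finally, to attain the bound I would construct an explicit sequence in family $(\mathsf{s}3)$. Choose $(2\alpha+1, 2\beta+1, 2\gamma+1)$ to be a balanced partition of $2n+1$ into three odd parts with largest part exactly $B_n$, as produced by the key fact. For the second block set $n_0 = 0$ (no copy of the value $0$) and let $(n_1,n_2,n_3)$, the multiplicities of $f_1, f_2, f_1+f_2$ among the $d_i$, be another such balanced odd partition of $2n+1$, so each of these multiplicities is at most $B_n$. The only remaining constraint is $\sigma(S')\notin\supp(S')$, and here is the main obstacle: one must satisfy it without letting any multiplicity exceed $B_n$. The clean resolution is that a partition into three \emph{odd} parts makes $n_1,n_2,n_3$ all odd, whence $\sigma(S') = (n_1+n_3)f_1+(n_2+n_3)f_2 = 0$, while $0 \notin \supp(S')$ precisely because $n_0 = 0$; thus $\sigma(S')=0\notin\supp(S')$. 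Since every multiplicity of the resulting $S$ is then at most $B_n$, with equality attained in the first block, we get $\mathsf{h}(S) = B_n$, completing the proof.
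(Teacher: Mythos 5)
Your proposal is correct and follows essentially the same route as the paper: read off the minimal height in each of the three families of Theorem \ref{Inverse_EGZ} (with the binding case being the three odd multiplicities of $0,f_1,f_2$ in $(\mathsf{s}3)$ summing to $2n+1$), then exhibit a balanced sequence of type $(\mathsf{s}3)$ whose second block consists of three order-two elements with odd multiplicities so that $\sigma(S')=0\notin\supp(S')$. The paper's extremal example is the special case of yours with $(n_1,n_2,n_3)=(2\alpha+1,2\beta+1,2\gamma+1)$, so the arguments coincide in substance.
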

\begin{proof}
For sequences of the second type in Theorem \ref{Inverse_EGZ}, it is clear that $\mathsf{h}(S)= 2n - 1$, and the claim follows. 
For sequences of the first type in Theorem \ref{Inverse_EGZ}, considering for example $0$ and $f_2$ we see that $\mathsf{h}(S)> 2n/2$; again the claim follows. For sequences of the third type in Theorem \ref{Inverse_EGZ}, considering $0$, $f_1$, and $f_2$ we see that $\mathsf{h}(S)\ge 2\lfloor (n -1)/3 \rfloor +1$, which yields the claimed bounds.

To see that the bounds are attained we consider the sequence, where $\{f_1,f_2, f_3\}$ is a basis of $G$ with
$\ord(f_1) = \ord(f_2) = 2$ and $\ord(f_3) = 2n$, 
\[0^{2\alpha+1} f^{2\beta+1}_1 f^{2\gamma+1}_2 (f_3 + f_1)^{2\alpha+1} (f_3 + f_2)^{2\beta+1} (f_3 + (f_1 + f_2))^{2\gamma+1}\]
where 
\[ 
(\alpha,\beta, \gamma)=
\begin{cases} 
\left(\frac{n}{3},\frac{n}{3}, \frac{n-3}{3}\right)  & \text{ if } n \equiv 0 \pmod{3}\\  
\left(\frac{n-1}{3},\frac{n-1}{3}, \frac{n-1}{3}\right) & \text{ if } n \equiv 1 \pmod{3}\\  
\left(\frac{n+1}{3},\frac{n-2}{3}, \frac{n-2}{3}\right) & \text{ if } n \equiv 2 \pmod{3}
 \end{cases}\] 
which is of the form given in Theorem \ref{Inverse_EGZ} as the sum of  $f_1^{2\alpha+1}  f_2^{2\beta+1} (f_1 + f_2)^{2\gamma+1}$ is $0$.
\end{proof}

An interesting aspect is that for $C_2 \oplus C_2 \oplus C_{2n}$ there are extremal sequences of height significantly below $\exp(G)/2$, 
contrary to all results established so far  (see the discussion in \cite{Wolfgang}, in particular Corollary 3.3 there, where the still stronger conjecture that the height is always $\exp(G)-1$ was refuted).
This seems noteworthy as there is a well-known  technical result (see \cite[Proposition 2.7]{Gao03} or also \cite[Lemma 4.3]{Wolfgang}) that implies  that if $\mathsf{h}(S)\ge \lfloor (\exp(G)-1)/2 \rfloor$ for each sequence $S$ over $G$ of length $\mathsf{s}(G)-1$ without  zero-sum subsequence of length $\exp(G)$, then Gao's conjecture $\mathsf{s}(G)= \eta(G) + \exp(G)-1$ that we recalled in the introduction holds true for the  group $G$. Thus, it seems interesting that despite the relatively low height of some extremal sequences over $C_2  \oplus C_2 \oplus C_{2n}$ Gao's conjecture still holds. 
An explanation for this can be given by a more detailed analysis of the structure of extremal sequences and a recent refinement of the above mentioned technical result (see Lemma \ref{lem_exp-1} below). 

\begin{corollary}
Let $G \simeq C_2 \oplus C_2 \oplus C_{2n}$, where $n \ge 2$. 
For every sequence $S$ over $G$ of length $|S|=\mathsf{s}(G)-1$ without zero-sum subsequence of length $\exp(G)$ there is some $f\in G$ such that $-f + S = CT$ where $T$ is a sequence over $G$ of length $|T|=\eta(G)-1$ without short zero-sum subsequence and 
$C$ is a sequence of length $\exp(G)-1$ of the form $0^{2u + 1}f_1^{2v}f_2^{2w}$ with nonnegative integers $u,v,w$,
and $f_1,f_2\in G$ with $\ord(f_1) = \ord(f_2) = 2$.
\end{corollary}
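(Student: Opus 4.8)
The plan is to read off the decomposition directly from the classification in Theorem \ref{Inverse_EGZ} and to recognise the complementary factor as one of the extremal sequences described in Theorem \ref{Inverse_eta}. So let $S$ be a sequence over $G$ of length $4n+2$ without zero-sum subsequence of length $\exp(G)=2n$. By Theorem \ref{Inverse_EGZ} there is a basis $\{f_1,f_2,f_3\}$ with $\ord(f_1)=\ord(f_2)=2$ and $\ord(f_3)=2n$, and an element $f\in G$, such that $-f+S$ equals one of the three sequences $(\mathsf{s}1)$, $(\mathsf{s}2)$, $(\mathsf{s}3)$. For each of these I would peel off an explicit subsequence $C$ of the prescribed shape $0^{2u+1}f_1^{2v}f_2^{2w}$ and check that the complementary subsequence $T = (-f+S)C^{-1}$ is, up to relabelling the parameters, one of the forms $(\eta1)$, $(\eta2)$, $(\eta3)$; Theorem \ref{Inverse_eta} then guarantees that $|T| = \eta(G)-1 = 2n+3$ and that $T$ has no short zero-sum subsequence, which is exactly what is asserted.

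In case $(\mathsf{s}1)$ I would take $T$ to be the product of the tail $f_3^{2n-1-2\beta}(f_3+f_2)^{2\beta+1}(af_3+f_1)\bigl((1-a)f_3+f_2+f_1\bigr)$ with a single factor $f_2$; this is precisely $(\eta1)$ with $v=\beta$. What remains is $C = 0^{2\alpha+1}f_2^{\,2n-2\alpha-2}$, and since $2n-2\alpha-2 = 2(n-\alpha-1)$ is even and $(2\alpha+1)+(2n-2\alpha-2)=2n-1$, this $C$ has the required form and length $\exp(G)-1$. In case $(\mathsf{s}2)$ I would take $T = f_3^{2n-1}(af_3+f_2)\bigl((1-a)f_3+f_2\bigr)(bf_3+f_1)\bigl((1-b)f_3+f_1\bigr)$, which is $(\eta2)$, and $C = 0^{2n-1}$. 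In case $(\mathsf{s}3)$ I would take $T = \prod_{i=1}^{2n+1}(f_3+d_i)f_2f_1$, which is $(\eta3)$ because the condition $\sigma(S')\notin\supp(S')$ is carried over verbatim, and $C = 0^{2\alpha+1}f_1^{2\beta}f_2^{2\gamma}$; the relation $\alpha+\beta+\gamma=n-1$ gives $|C| = 2(\alpha+\beta+\gamma)+1 = 2n-1$.

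The only thing requiring care is the bookkeeping of lengths and parities: one must check in each case that $C$ has length exactly $\exp(G)-1=2n-1$, that the multiplicity of $0$ is odd while those of $f_1$ and $f_2$ are even, and that exactly one copy each of $f_1$ and $f_2$ (respectively one copy of $f_2$) is absorbed into $T$ so that $T$ matches the corresponding $\eta$-form on the nose. All of this is immediate arithmetic from the explicit exponents, so I expect no genuine obstacle; the substance of the statement lies entirely in the two classification theorems already proved.
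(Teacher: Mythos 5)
Your proposal is correct and is essentially the paper's own argument: the paper likewise proves the corollary by matching each form $(\mathsf{s}i)$ against the corresponding $(\eta i)$ and noting that the leftover factor $C$ has the stated shape (with only special cases of $C$ arising for $(\mathsf{s}1)$ and $(\mathsf{s}2)$). Your explicit length and parity checks all come out right.
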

\begin{proof}
This follows rather directly by comparing the sequences given in Theorems \ref{Inverse_eta} and \ref{Inverse_EGZ}.
For a sequence $S$ of the form given in point $(\mathsf{s} 1)$ the sequence $T$ is of the form given in $(\eta 1)$ and likewise for the other points; for the sequences in $(\mathsf{s} 1)$ and $(\mathsf{s} 2)$ only more special forms of sequences $C$ arise. 
\end{proof}

We note that the sequences $(f+C)$, with $C$ as in the corollary above, have the property $j f \in \Sigma_{j}(f+C)$ for each $j \le |f+C|$ 
thus the following lemma (see \cite[Lemma 4.4]{GirardSchmid1}) is applicable.

\begin{lemma}
\label{lem_exp-1}
Let $G$ be a finite abelian group. Let $S$ be a sequence over $G$ of length $\eta(G) +  \exp(G)-1$. 
Let $C' \mid S$ be a subsequence such that there exists some $f \in G$ with $j f \in \Sigma_{j}(C')$ for each $j \le |C'|$. 
If $|C'|  \ge \lfloor (\exp(G ) - 1)/2 \rfloor$, then $S$ has a zero-sum subsequence of length $\exp(G)$.
\end{lemma}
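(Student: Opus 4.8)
The plan is to reduce to the case $f=0$ and then combine a greedy extraction of short zero-sum subsequences from the complement of $C'$ with the flexibility provided by $C'$. Throughout write $e=\exp(G)$ and $L=|C'|$. Since $ef=0$, a length-$e$ subsequence $X$ of $S$ satisfies $\sigma(X)=0$ if and only if $\sigma(-f+X)=\sigma(X)-ef=\sigma(X)=0$; hence $S$ has a zero-sum subsequence of length $e$ if and only if $-f+S$ does. Moreover, if $T\mid C'$ has $|T|=j$ and $\sigma(T)=jf$, then $-f+T\mid -f+C'$ is a zero-sum subsequence of length $j$. Replacing $S$ by $-f+S$ and $C'$ by $-f+C'$, I would therefore assume $f=0$, so that the hypothesis becomes: $C'$ has a zero-sum subsequence of every length $j\in[1,L]$, with $L\ge\lfloor(e-1)/2\rfloor$. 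If $L\ge e$, taking $j=e$ already gives a zero-sum subsequence of $C'$, hence of $S$, of length $e$; so I may assume $L\le e-1$.

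Next I would set $R=S(C')^{-1}$, so that $|R|=\eta(G)+e-1-L\ge\eta(G)$, and extract from $R$ a maximal family of pairwise disjoint short zero-sum subsequences $B_1,\dots,B_m$: as long as the not-yet-removed part has length at least $\eta(G)$ it contains a short (length in $[1,e]$) zero-sum subsequence by definition of $\eta(G)$, so at the end the leftover has length at most $\eta(G)-1$ and $\sum_{i=1}^m|B_i|\ge |R|-(\eta(G)-1)=e-L$. The target is then to choose a subfamily of the $B_i$ whose total length $p$ lies in the window $[e-L,e]$: the corresponding product is a zero-sum subsequence of $R$ of length $p$, and completing it with a zero-sum subsequence of $C'$ of length $e-p\in[0,L]$ (the empty one if $p=e$, which exists by the translated hypothesis, and which is disjoint from $W$ since it lives in $C'$) produces a zero-sum subsequence of $S$ of length exactly $e$.

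The one genuine difficulty is that a single block $B_i$ might be long enough to overshoot the window $[e-L,e]$ in a single step, so a naive greedy accumulation could jump from below $e-L$ to above $e$. I would resolve this by a clean dichotomy. If some block satisfies $|B_i|\in[e-L,e]$, I take that block alone and finish. Otherwise, since $|B_i|\le e$ always, every block satisfies $|B_i|\le e-L-1$; then I accumulate the blocks one at a time, and because the partial sums start at $0$, eventually reach $\ge e-L$, and grow by at most $e-L-1<e-L$ at each step, the first partial sum $p$ with $p\ge e-L$ obeys $p\le 2(e-L-1)$. Here the hypothesis enters: from $L\ge\lfloor(e-1)/2\rfloor$ one gets $L\ge(e-2)/2$, i.e.\ $2(e-L-1)\le e$, so $p\in[e-L,e]$ as needed.

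In summary, the structure is: translate to $f=0$; dispose of the case $L\ge e$ directly; otherwise extract a maximal family of short zero-sum subsequences from $R$ of total length at least $e-L$; and finally hit the window $[e-L,e]$ by subset-sum, where the only subtlety — an oversized block — is handled by using such a block on its own, the remaining small-block case being controlled precisely by the width estimate $2(e-L-1)\le e$ coming from $L\ge\lfloor(e-1)/2\rfloor$. I expect this last window-hitting step to be the crux, while everything else is routine bookkeeping with the definition of $\eta(G)$.
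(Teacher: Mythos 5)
Your proof is correct: the translation by $-f$ turns the hypothesis $jf\in\Sigma_j(C')$ into the availability of zero-sum ``padding'' subsequences of every length up to $|C'|$, the greedy extraction from $S(C')^{-1}$ yields disjoint short zero-sum blocks of total length at least $\exp(G)-|C'|$, and your dichotomy (one oversized block versus all blocks of length at most $\exp(G)-|C'|-1$) is exactly what makes the partial sums land in the window $[\exp(G)-|C'|,\exp(G)]$ under the bound $|C'|\ge\lfloor(\exp(G)-1)/2\rfloor$. The paper itself gives no proof of this lemma --- it is quoted from \cite[Lemma 4.4]{GirardSchmid1} as a refinement of \cite[Proposition 2.7]{Gao03} --- and your argument is the standard one underlying those results, so there is nothing to correct.
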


After this discussion we now turn to the proof of the theorem itself.

\begin{proof}[Proof of Theorem \ref{Inverse_EGZ}]
Let $G \simeq C_2 \oplus C_2 \oplus C_{2n}$, where $n \geq 2$. 
As in the previous proof, we use the inductive method with 
\[
H \hookrightarrow G \overset{\pi} \rightarrow G/H
\]
where $H$ is the unique cyclic subgroup of order $n$ such that $G/H \simeq C_2^3$. 

Let $S$ be a sequence over $G$ such that $\left|S\right| = 4n+2$ that contains no zero-sum subsequence of length $2n$. 
Since $\left|S\right| = 2(2n-4)+ 10 \ge \mathsf{s}(G/H)+2(2n-4)$, there exist $2n-3$ disjoint subsequences $S_{1},\dots,S_{2n-3}$ of $ S$ such that $\left|S_i\right| = 2$ and $\sigma\left(S_i\right) \in H$ for all $i \in  [1,2n-3]$. 
In particular, $S=S_1 \cdots S_{2n-3} S_0$ where $S_0$ has length $|S_0|=8$.

\medskip
On the one hand, all elements of $\pi(S_0)$ have multiplicity at most three and at most one of them has multiplicity at least two, otherwise we could extract from $S_0$ two disjoint subsequences $S_{2n-2}$ and $S_{2n-1}$ satisfying $|S_{2n-2}|=|S_{2n-1}|=2$ and $\sigma\left(S_{2n-2}\right), \sigma\left(S_{2n-1}\right) \in H$ so that $S$ would contain a zero-sum subsequence of length $2n$, a contradiction. In particular, we have $|\supp(\pi(S_0))| \ge 6$. 

\medskip
On the other hand, $T=\prod^{2n-3}_{i=1} \sigma(S_i)$ is a sequence over $H$ of length $|T|=2n-3$, containing no zero-sum subsequence of length $n$, otherwise $S$ would contain a zero-sum subsequence of length $2n$, a contradiction.

\medskip 
By Theorem \ref{Inverse_EGZ_cyc} we get that $|H \setminus \left(-\Sigma_{n-2}\left(T\right)\right)|\le 1$.
Since $|\supp(\pi(S_0))| \ge 6$ it follows by Lemma \ref{sum_=4} that one can extract from $S_0$ a subsequence $U \mid S_0$ such that $\left|U\right| = 4$ and $\sigma\left(U\right) \in H$. Since $S$ contains no zero-sum subsequence of length $2n$, it follows that  $\sigma\left(U\right)$ is an element of $H \setminus -\left( \Sigma_{n-2}\left(T\right)\right)$. Since we saw that $|H \setminus -\left(\Sigma_{n-2}\left(T\right)\right)|\le 1$, it follows that this element is uniquely determined and $|H \setminus \left(- \Sigma_{n-2}\left(T\right)\right)|= 1$.  

\medskip 
By Theorem \ref{Inverse_EGZ_cyc} we get that  there exist two elements $g,h \in H$ satisfying $\ord(h-g)=n$  such that, by relabeling the $S_i$ for $i \in [1,2n-3]$ if necessary, $S$ can be decomposed as $S_1 \cdots S_{2n-3} S_0$ where $\sigma(S_i)=g$ for all $i \in [1,n-1]$ and $\sigma(S_i)=h$ for all $i \in [n, 2n-3]$.
In particular,  $H \setminus \left(- \Sigma_{n-2}\left(T\right)\right) =\{g+h\}$.
We now distinguish cases according to the cardinality of $|\supp(\pi(S_0)|$.

\medskip
\textbf{Case 1.} $|\supp(\pi(S_0))| = 8$. In this case, let $x$ be any element of $S_0$ and $S'_0=S_0x^{-1}$.
Note that $S'_0$ is a subsequence of $S_0$ of length $|S'_0|=7$ and that $\pi(S'_0)$ consists of seven distinct elements of $G/H$. 
Now, let $q,r$ be two distinct elements of $S'_0$ and decompose $S'_0r^{-1}$ in such a way that $S'_0r^{-1}=qstuvw$, 
where $\pi(q)+\pi(s)=\pi(t)+\pi(u)=\pi(v)+\pi(w)=\pi(r)+\pi(x)$. 
It is easily seen that $\pi(s)+\pi(t)+\pi(v)$ and $\pi(s)+\pi(t)+\pi(w)$ are two distinct elements of the set 
$(G/H) \setminus \pi(S'_0r^{-1})=\{\pi(r),\pi(x)\}$,
so that one of them, say $\pi(s)+\pi(t)+\pi(w)$, is equal to $\pi(r)$.
It follows that $U_1=sqtu, U_2=sqvw, U_3=strw, U_4=suvr$ and $U_5=tuvw$ are five subsequences of $S'_0$ of length $4$ with sum in $ H$.

\medskip
If $n=2$, let $y$ be the only element of $H$ satisfying $\ord(y)=n$. 
Since $S$ contains no zero-sum subsequence of length $4$, we have $\sigma(U_i)=y$ for every $i \in [1,5]$.
Therefore, we have $\sigma(U_1)+\sigma(U_2)-\sigma(U_3)-\sigma(U_4)=0$ which is equivalent to $2q=2r$. 
Since the argument applies to any three distinct elements $q,r,x$ of $S_0$, we obtain that $2a \in H $ is constant over all elements $a$ of $S_0$.
Now, $\sigma(U_1)+\sigma(U_2)-\sigma(U_5)=y$ readily gives $0=2(2q)=2s+2q=y$, which is a contradiction.

\medskip
If $n \ge 3$, then $\sigma(U_i) \in H \setminus \left(-\Sigma_{n-2}(T)\right)=\{g+h\}$ for all $i \in [1,5]$.
Therefore, $\sigma(U_i)=g+h$ for all $i \in [1,5]$ which yields
$\sigma(U_1)+\sigma(U_2)-\sigma(U_3)-\sigma(U_4)=0$ which is equivalent to $2q=2r$. 
Since the argument applies to any three distinct elements $q,r,x$ of $S_0$, we obtain that $2a \in H$ is constant over all elements $a$ of $S_0$.
The equality $\sigma(U_1)+\sigma(U_2)-\sigma(U_5)=g+h$ readily gives $4q=2(s+q)=g+h$ so that $4a=g+h$ for any element $a$ of $S_0$.

\medskip
Since $\supp(\pi(S_0))=G/H$, for any $i \in [1,2n-3]$ and any $a_i \mid S_i$, there exists an element $a$ of $S_0$ such that $\pi(a)=\pi(a_i)$.
Setting $T_0=S_0a^{-1}a_i$, $T_i=S_ia^{-1}_ia$ and $T_j=S_j$ for all $j \in [1,2n-3]$ such that $i \neq j$, we obtain a new decomposition of $S$ having the form
\[
S=T_1\cdots T_{2n-3}T_0, \]
where  there exist two elements $g',h' \in H$ satisfying $\ord(h'-g')=n$  such that, by relabeling the $T_i$ for $i \in [1,2n-3]$ if necessary, we have $\sigma(T_i)=g'$ for all $i \in [1,n-1]$ and $\sigma(T_i)=h'$ for all $i \in [n, 2n-3]$.
Using the same argument as above, we obtain that $2x \in H$ is constant over all elements $x$ of $T_0$, and that $4x=g'+h'$ for any $x$ of $T_0$. 
Since at least seven elements of $T_0$ and $S_0$ are equal, it follows that $g'+h' = g+h$, and $4a = 4a_i$. 

\medskip
\textbf{Case 1.1.}  $n$ is odd. Since $4a = 4a_i$, it follows that $2a = 2a_i$, and since  moreover $\pi(a)= \pi(a_i)$, we get that $a = a_i$. 
Since $a_i$ was chosen arbitrarily, it follows that $\supp(S) = \supp(S_0)$. In particular, this implies that $S_i = a_i^2$ for each $i \in [1,2n-3]$; to see this just recall that $\sigma(S_i) \in H$ implies that both elements in $S_i$ have the same image under $\pi$. 

We recall that $2a$ is constant over all elements in $S_0$, and thus in $S$. 
Yet, this would mean that $\sigma(S_i)$ is the same for all $i \in [1,2n-3]$, that is, $g=h$, a contradiction. 

\medskip
\textbf{Case 1.2.} $n$ is even. Since we assumed $n \ge 3$, we have $n \ge 4$ (and this is actually all that we use). 
Applying the replacement above with some $i \in [n,2n-3]$, it can be seen that $h'= h$ (here we use $n \ge 4$, so that the original sequence contains $h$ not only once).
Moreover, it follows that $S_i = a^2$ where $a$ is again the element of $S_0$ such that $\pi(a_i) = \pi(a)$. 
  
In particular, $h = \sigma(S_i)= 2a$. Since we already know that $4a=g+h$, we obtain $g+h=4a=2(2a)=2h$, that is, $g=h$, a contradiction.

\medskip
\textbf{Case 2.} $|\supp(\pi(S_0))| = 7$. The argument is similar to the preceding case. There exists a subsequence $S'_0 \mid S_0$ of length $|S'_0|=7$ such that $\pi(S'_0)$ consists of seven distinct elements of $H$. 
Now, let $q,r$ be two distinct elements of $S'_0$.
We denote by $\gamma$ the only element in $G/H \setminus \{ \pi(S'_0)\}$ and decompose $S'_0r^{-1}$ in such a way that $S'_0r^{-1}=qstuvw$, where $\pi(q)+\pi(s)=\pi(t)+\pi(u)=\pi(v)+\pi(w)=\pi(r)+\gamma$. 
It is easily seen that $\pi(s)+\pi(t)+\pi(v)$ and $\pi(s)+\pi(t)+\pi(w)$ are two distinct elements of the set 
$G/H \setminus \pi(S'_0r^{-1})=\{\pi(r),\gamma\}$,
so that one of them, say $\pi(s)+\pi(t)+\pi(w)$ is equal to $\pi(r)$.
It follows that $U_1=sqtu, U_2=sqvw, U_3=strw, U_4=suvr$ and $U_5=tuvw$ are five subsequences of $S'_0$ of length $4$ with sum in $ H$.

\medskip
If $n=2$, let $y$ be the only element of $H$ satisfying $\ord(y)=n$. 
Since $S$ contains no zero-sum subsequence of length $4$, we have $\sigma(U_i)=y$ for every $i \in [1,5]$.
Therefore, we have $\sigma(U_1)+\sigma(U_2)-\sigma(U_3)-\sigma(U_4)=0$ which is equivalent to $2q=2r$. 
Since the argument applies to any two distinct elements of $S'_0$, we obtain that $2a \in H$ is constant over all elements $a$ of $S'_0$.
Now, $\sigma(U_1)+\sigma(U_2)-\sigma(U_5)=y$ readily gives $0=2(2q)=2(s+q)=y$, which is a contradiction.

\medskip 
We can thus assume that $n \ge 3$, so that $\sigma(U_i) \in H \setminus \left(-\Sigma_{n-2}(T)\right)=\{g+h\}$ for all $i \in [1,5]$.
Therefore, $\sigma(U_i)=g+h$ for all $i \in [1,5]$ which yields $\sigma(U_1)+\sigma(U_2)-\sigma(U_3)-\sigma(U_4)=0$ which is equivalent to $2q=2r$. 
Since the argument applies to any two distinct elements of $S'_0$, we obtain that $2a \in H$ is constant over all elements $a$ of $S'_0$.
The equality $\sigma(U_1)+\sigma(U_2)-\sigma(U_5)=g+h$ readily gives $4q=2(s+q)=g+h$ so that $4a=g+h$ for any element $a$ of $S'_0$.

\medskip
Now, let $a$ be an element of $S'_0$ and let $V_1$ be a subsequence of $S_0$ such that $|V_1|=6$ and $\sigma(V_1) \in H$; such a sequence exists as $\pi(S_0)$, not being  squarefree, has a zero-sum subsequence of length $2$, and the remaining squarefree sequence of length $6$ has a zero-sum subsequence of length $4$ by Lemma \ref{sum_=4}.  
If $n=3$, then $\sigma(V_1) \in H \setminus \{0\} = \{2h+g, h+2g\}$.
If $n \ge 4$, then $\sigma(V_1) \in H \setminus \left(- \Sigma_{n-3}\left(T\right)\right) = \{2h+g, h+2g\}$ also.
In all cases, we obtain either $2(2h+g)=2\sigma(V_1)=6(2a)=3(4a)=3(g+h)$ or $2(h+2g)=2\sigma(V_1)=6(2a)=3(4a)=3(g+h)$ which both imply $g=h$, a contradiction.

\medskip

\textbf{Case 3.} $\supp(\pi(S_0))=6$. 
We get that $\pi(S_0)$ consists of one element repeated three times and five other elements; recall that at most one element of  $\pi(S_0)$ has multiplicity greater than one. 
Thus $S_0$ contains a subsequence $S_{2n-2} \mid S_0$ such that $\left|S_{2n-2}\right| = 2$ and $\sigma\left(S_{2n-2}\right) \in H$. 
It follows that, whatever $n \ge 2$ we consider, there exist two elements $g,h \in H$ satisfying $\ord(h-g)=n$ such that $S$ can be decomposed as $S_1 \cdots S_{2n-2} S'_0$ where $\sigma(S_i)=g$ for all $i \in [1,n-1]$, $\sigma(S_i)=h$ for all $i \in [n,2n-2]$, and $S'_0$ has length $|S'_0|=6$; furthermore, $\supp(\pi(S_1 \cdots S_{2n-2})) \cap \supp(\pi(S'_0)) \neq \emptyset$.

\medskip
Now, let us prove that, after a translation by an element of $G$, the sequence $S$ can be decomposed as $0 T_1 \cdots T_{n-1} V$ where $\left|T_i\right| = 2$ and $\sigma\left(T_i\right)=0$ for all $i \in [1,n-1]$, and where $V$, which has length $|V|=2n+3=\eta(G)-1$, contains no short zero-sum subsequence. 

\medskip
Since $\supp (\pi(S_1 \cdots S_{2n-2})) \cap \supp (\pi(S'_0) ) \neq \emptyset$, there exist an element $f$ of $S'_0$ and an element $a_i$ of some $S_i$, where $i \in [1,2n-2]$, such that $\pi(f)=\pi(a_i)$.
If $i \in [1,n-1]$, then replacing $S_i$ by $f a_i$ or $f (g-a_i)$, we easily infer that $f=a_i=g-a_i$. 
In particular, $2f=g$, so that 
\[
 -f+S = 0 (-f+S_1) \cdots  (-f+S_{n-1}) V
\]
where $V$ has length $|V|=4n+2-2(n-1)-1=2n+3=\eta(G)-1$. We set $T_i = -f+S_i$ for each $i \in [1,n-1]$. Since for every $t \in [1,2n-1]$, the sequence $0 T_1 \cdots T_{n-1}$  contains a zero-sum subsequence of length $t$, it follows that $V$ has no short zero-sum subsequence; 
otherwise $-f+S$ and thus $S$ would have a zero-sum subsequence of length $2n$. 

If $i \in [n,2n-2]$, the argument is analogous, just replacing $g$ by $h$ and considering the sequence $S_n$, \dots, $S_{2n-2}$.

\medskip
We now know that for every sequence $S$ over $G$ such that $|S| = 4n+2$ and containing no zero-sum subsequence of length $2n$, there is an $f \in G$ such that $-f +S$  can be decomposed as $0 T_1 \cdots T_{n-1} V$ where $|T_i| = 2$ and $\sigma(T_i)=0$ for all $i \in [1,n-1]$, and where $V$, which has length $|V|=2n+3=\eta(G)-1$, contains no short zero-sum subsequence.

\medskip
We recall that $S$ has a zero-sum subsequence of length $2n$ if and only if $-f+S$ has a zero-sum subsequence of length $2n$. Thus, $-f+S$ has no zero-sum subsequence of length $2n$. To simplify notation, we assume without loss that $f =0$. 

\medskip
On the one hand, note that $\Sigma_{[1,2n-1]}(V) = G \setminus \{0\}$. 
On the other hand, we assert that: if $x \mid T_i$ for some $i \in [1,n-1]$, then $x \notin -\Sigma_{[2,2n-1]}(V)$. To see this note that otherwise $V$ would contain a subsequence $V' \mid V$ of length $|V'|=k \in [2,2n-1]$ such that $x=-\sigma(V')$. 
If $k$ were odd, then the subsequence $S'=xV' \prod_{j \in J} T_j \mid S$, where $J$ is any subset of $[1,n-1] \setminus \{i\}$ satisfying $|J|=n-(k+1)/2 \in [0,n-2]$ would have length $|S'|=2n$ and sum zero, a contradiction.
If $k$ were even, then the subsequence $S'=0xV' \prod_{j \in J} T_j \mid S$, where $J$ is any subset of $[1,n-1] \setminus \{i\}$ satisfying $|J|=n-(k+2)/2 \in [0,n-2]$ would have length $|S'|=2n$ and sum zero, a contradiction.
Therefore, any $x \in \supp(T_1 \cdots T_{n-1})$ is either zero or an element of $G \setminus \{0\} \setminus (-\Sigma_{[2,2n-1]}(V)) = -\Sigma_{[1,2n-1]}(V) \setminus (-\Sigma_{[2,2n-1]}(V))=-\supp(V)$.

Since $x \mid T_i$ if and only if $-x \mid T_i$, we obtain 
\[
\supp(T_1 \cdots T_{n-1}) \subset \{0\} \cup  \bigl(\supp(V) \cap (-\supp(V)) \bigr).\] 
Finally, using Theorem \ref{Inverse_eta}, we know that there exists a basis $\{f_1,f_2, f_3\}$ of $G$, where $\ord(f_1) = \ord(f_2) = 2$ and $\ord(f_3) = 2n$, such that $V$ has one out of three possible forms. 

\medskip
If $V$ is of type $(\eta 1)$, then
\[
\supp(T_1 \cdots T_{n-1}) \subset \{0\} \cup \bigl(\supp(V) \cap (-\supp(V))\bigr)=\{0,f_2\},
\]
so that we have 
\[
S=0^{2\alpha+1} f^{2n-2\alpha-2}_2 f^{2n-1-2v}_3 (f_3+f_2)^{2v+1}(af_3+f_1)\bigl((1-a)f_3+f_2+f_1\bigr)
\]
with $\alpha,\beta  \in [ 0, n-1 ]$ and $a \in [ 2, n-1 ]$.
It remains to check that such a sequence contains no zero-sum subsequence of length $2n$.
Since $\Sigma(0^{2\alpha+1} f^{2n-2\alpha-2}_2) \subset \{0,f_2\}$,  it suffices to check that  $0 \notin \Sigma_{[1,2n]}\bigl(f^{2n-1-2v}_3 (f_3+f_2)^{2v+1}(af_3+f_1)\bigl((1-a)f_3+f_2+f_1\bigl)\bigl)$ and  that $f_2 \notin \Sigma_{[1,2n-1]}(f^{2n-1-2v}_3 (f_3+f_2)^{2v+1}(af_3+f_1)\bigl((1-a)f_3+f_2+f_1\bigr)\bigr)$. The former is a consequence of Theorem \ref{Inverse_eta} and the latter follows by noting that if a subsequence with sum $f_2$ contains $(af_3+f_1)$ or $\bigl((1-a)f_3+f_2+f_1\bigr)$, then it contains both, so that we need at least $\ord(f_3)=2n$ elements in this subsequence. 

\medskip
If $V$ is of type $(\eta 2)$, then
\[
\supp(T_1 \cdots T_{n-1}) \subset \{0\} \cup \bigl(\supp(V) \cap (-\supp(V))\bigr)=\{0\},
\]
so that, up to translation by an element of $G$, we have 
\[
S=0^{2n-1}f^{2n-1}_3 (af_3 + f_2)\bigl((1 - a)f_3 + f_2\bigr)(bf_3 + f_1)\bigl((1 - b)f_3 + f_1\bigr)
\] 
with $a, b \in [ 2, n - 1 ]$ and $a \ge b$.
Such a sequence contains no zero-sum subsequence of length $2n$ indeed.

\medskip
If $V$ is of type $(\eta 3)$, then
\[
\supp(T_1 \cdots T_{n-1}) \subset \{0\} \cup \bigl(\supp(V) \cap (-\supp(V))\bigr)=\{ 0, f_1, f_2 \},
\]
so that, up to translation by an element of $G$, we have
\[
S=0^{2\alpha+1} f_1^{2\beta+1} f_2^{2\gamma+1} \prod^{2n+1}_{i=1}(f_3 + d_i)
\] 
where $\alpha,\beta,\gamma \in [0,n-1]$ are such that $\alpha+\beta+\gamma=n-1$, and $S' = \prod^{2n+1}_{i=1} d_i \in \mathcal{F}(\langle f_1, f_2 \rangle)$ and $\sigma(S') \notin \supp(S')$.
It remains to check that such a sequence contains no zero-sum subsequence of length $2n$.
Since $\Sigma(0^{2\alpha+1} f_1^{2\beta+1} f_2^{2\gamma+1}) = \{0,f_1,f_2\}$,  it suffices to check that  $0 \notin \Sigma_{[1,2n]}(\prod^{2n+1}_{i=1}(f_3 + d_i))$ and  that $f_1,f_2 \notin \Sigma_{[1,2n-1]}(\prod^{2n+1}_{i=1}(f_3 + d_i))$.
The former is a consequence of Theorem \ref{Inverse_eta} and the latter follows by noting that we need at least $\ord(f_3)=2n$ elements in a subsequence of  $\prod^{2n+1}_{i=1}(f_3 + d_i)$ whose sum is in $\langle f_1 , f_2 \rangle$. 
\end{proof}

\section*{Acknowledgments}
The authors thank the referee for a careful reading and useful remarks that helped to improve the paper.

\end{document}